\newtheorem{theorem}{Theorem}[section]
\newtheorem{proposition}[theorem]{Proposition}
\newtheorem{lemma}[theorem]{Lemma}
\theoremstyle{definition}
\newtheorem{definition}[theorem]{Definition}
\newtheorem{example}[theorem]{Example}
\newtheorem{problem}[theorem]{Problem}
\theoremstyle{remark}
\newtheorem{remark}[theorem]{Remark}
\numberwithin{equation}{section}
\newcommand{\Z}{\mathbb{Z}}
\newcommand{\N}{\mathtt{N}}
\newcommand{\open}{\operatorname{Open}}
\newcommand{\colim}{\operatorname{colim}}
\newcommand{\Ker}{\operatorname{Ker}}
\newcommand{\sky}{\mathtt{sky}}
\newcommand{\BM}{\mathrm{BM}}
\newcommand{\X}{\mathcal{X}}
\renewcommand{\P}{\mathcal{P}}
\title[Cellular cosheaf homology are cosheaf homology]{Cellular cosheaf homology are cosheaf homology}
\author[Daisuke Kishimoto]{Daisuke Kishimoto}
\address{Department of Mathematics, Kyoto University, Kyoto, 606-8502, Japan}
\email{kishi@math.kyoto-u.ac.jp}
\author[Yasutomo Yushima]{Yasutomo Yushima}
\address{Department of Mathematics, Kyoto University, Kyoto, 606-8502, Japan}
\email{yyushima811@gmail.com}
\date{\today}
\subjclass[2010]{55N30, 18F20}
\keywords{cosheaf, cellular cosheaf, cosheafification, cosheaf homology, \v{C}ech homology, Borel-Moore homology, Alexandroff space}
\begin{document}

\maketitle

\begin{abstract}
  A cosheaf is the dual notion of a sheaf, but we cannot define its homology as the formal dual of sheaf cohomology, in general, because of the lack of the cosheafification. A cellular cosheaf is a contravariant functor from the face poset of a CW complex to the category of abelian groups. We show that given a cellular cosheaf $F$, there is a natural way to associate a cosheaf $\widehat{F}$, for which we can define homology as the formal dual of sheaf cohomology, such that the Borel-Moore homology of $F$ is isomorphic to the homology of $\widehat{F}$ whenever the underlying CW complex of $F$ is a simplicial complex.
\end{abstract}


\section{Introduction}\label{Introduction}

A \emph{cosheaf} is the dual notion of a sheaf, that is, a cosheaf is a covariant functor from the poset of open sets of a given topological space into the category of abelian groups, which preserves certain colimits. The first appearance of cosheaves dates back, at least, to the 1960's \cite{B,D}, and since then, they appear ubiquitously in broad areas of mathematics. However, we cannot define the homology of a cosheaf as the formal dual of sheaf cohomology as we cannot prove the existence of the cosheafification in general.

A \emph{cellular cosheaf} is a contravariant functor from the face poset of a CW complex to the category of abelian groups. The Borel-Moore homology can be defined for a cellular cosheaf, and is often called cellular cosheaf homology. Recently, cosheaves and cellular cosheaves are intensely studied in applied algebraic topology as they are used to describe time dependent change of properties of spaces. See \cite{C,C1,C2,C3,G,GH,M,N,R,SMP,Y,YG}. By definition, cellular cosheaves are not cosheaves, and so it is natural to pose:

\begin{problem}
  \label{problem}
  Given a cellular cosheaf $F$, is there a natural way to associate a cosheaf $\widehat{F}$ such that the Borel-Moore homology of $F$ and the homology of $\widehat{F}$, if it were defined, are isomorphic?
\end{problem}

It is well known that there is a one-to-one correspondence between posets and $T_0$-Alexandroff spaces. We use this correspondence to study Problem \ref{problem}, and so we slightly generalize cellular cosheaves such that a cellular cosheaf is defined to be a functor from a poset to the category of abelian groups. We will extend the above correspondence to that there is a one-to-one correspondence between equivalence classes of cellular cosheaves and cosheaves on $T_0$-Alexandroff space (Theorem \ref{correspondence}). Thus we study cosheaves on $T_0$-Alexandroff spaces. The cosheafification cannot be constructed, in general, simply by dualizing the sheafification as mentioned above. However, as for precosheaves over $T_0$-Alexandroff spaces, we can prove the following. See \cite{C,P} for other attempts to construct the cosheafification.

\begin{theorem}
  [Theorem \ref{cosheafification}]
  Each precosheaf on a $T_0$-Alexandroff space admits the cosheafification.
\end{theorem}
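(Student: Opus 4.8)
The plan is to construct the cosheafification of a precosheaf $F$ as its coreflection into cosheaves, that is, as the value $\widehat F$ at $F$ of a right adjoint to the inclusion $\iota\colon\mathrm{CoSh}(X)\hookrightarrow\mathrm{PCoSh}(X)$ of the category of cosheaves on the $T_0$-Alexandroff space $X$ into the category of precosheaves on $X$, together with the counit $\widehat F\to F$. The new input available on an Alexandroff space is that this right adjoint has an explicit description via minimal open neighborhoods. Recall that each $x\in X$ has a smallest open neighborhood $U_x$, that the $U_x$ form a basis, and that $U_y\subseteq U_x$ whenever $y\in U_x$. For an open set $U$, restricting $F$ to the minimal opens contained in $U$ gives a diagram of abelian groups indexed by $\{U_x\mid x\in U\}$ ordered by inclusion, with the corestrictions of $F$ as transition maps (this is the restriction to $U$ of the cellular cosheaf associated with $F$), and I set
\[
  \widehat F(U):=\colim_{x\in U}F(U_x)
\]
to be its colimit, with the corestriction $\widehat F(U)\to\widehat F(V)$ for $U\subseteq V$ induced by the inclusion of indexing sets. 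When $U=U_x$ the indexing set $\{U_z\mid z\in U_x\}$ has greatest element $U_x$, so $\widehat F(U_x)\cong F(U_x)$; and the corestrictions $F(U_x)\to F(U)$ form a cocone, hence induce maps $\widehat F(U)\to F(U)$ natural in $U$, which will be the counit $\varepsilon_F\colon\widehat F\to F$ --- an isomorphism on every minimal open.

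Next I would check that $\widehat F$ is a cosheaf and that $(\widehat F,\varepsilon_F)$ has the universal property of the coreflection. For the first point, Theorem~\ref{correspondence} identifies $\mathrm{CoSh}(X)$ with the category $\mathrm{CC}(X)$ of cellular cosheaves on $X$ via the restriction-to-minimal-opens functor $\rho\colon\mathrm{CoSh}(X)\to\mathrm{CC}(X)$, whose quasi-inverse $E$ sends a cellular cosheaf $H$ to the cosheaf $U\mapsto\colim_{x\in U}H(x)$. Writing $\tilde\rho\colon\mathrm{PCoSh}(X)\to\mathrm{CC}(X)$ for the analogous restriction of precosheaves (well defined because the $U_x$ and their inclusions carry the poset structure of $X$), we have $\widehat F=E(\tilde\rho F)$ by construction, so $\widehat F$ is a cosheaf. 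For the universal property, fix a cosheaf $G$. Since $G$ is a cosheaf, $\iota G(U)=\colim_{x\in U}G(U_x)$, so a morphism of precosheaves $\iota G\to F$ is determined by --- and can be rebuilt from --- its restrictions $G(U_x)\to F(U_x)$, i.e.\ from a morphism $\rho G=\tilde\rho(\iota G)\to\tilde\rho F$ in $\mathrm{CC}(X)$. This yields natural bijections
\[
  \mathrm{Hom}_{\mathrm{PCoSh}(X)}(\iota G,F)\;\cong\;\mathrm{Hom}_{\mathrm{CC}(X)}(\rho G,\tilde\rho F)\;\cong\;\mathrm{Hom}_{\mathrm{CoSh}(X)}(G,E\tilde\rho F)=\mathrm{Hom}_{\mathrm{CoSh}(X)}(G,\widehat F),
\]
the middle one coming from the equivalence of Theorem~\ref{correspondence}; tracing through them identifies the counit of the resulting adjunction with $\varepsilon_F$. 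Hence $\widehat F$ is the cosheafification of $F$.

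The step I expect to demand the most care is the claim used just above: that a morphism $\iota G\to F$ of precosheaves out of a cosheaf $G$ is the same datum as a morphism $\rho G\to\tilde\rho F$ of cellular cosheaves. That it is \emph{determined} by its restrictions to minimal opens is immediate from $\iota G(U)=\colim_{x\in U}G(U_x)$; the work is to show that an arbitrary compatible family $\{G(U_x)\to F(U_x)\}_x$ extends to maps $G(U)\to F(U)$ commuting with the corestrictions $F(U)\to F(V)$ for all $U\subseteq V$. One proves this by a diagram chase comparing, through $\varepsilon_F$, the colimit cocones defining $\iota G(U)$ and $\widehat F(U)$, and it uses crucially that the $U_x$ form a basis, so that both colimits are indexed by the poset $U$. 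For existence alone there is a softer route: colimits in $\mathrm{Ab}$ commute with the direct sums and coequalizers in the gluing condition, so $\mathrm{CoSh}(X)$ is closed under colimits in $\mathrm{PCoSh}(X)$, and since $\mathrm{CoSh}(X)$ is locally presentable by Theorem~\ref{correspondence}, the colimit-preserving inclusion $\iota$ has a right adjoint by the adjoint functor theorem; this, however, does not produce the description of $\widehat F$ through minimal open neighborhoods on which the rest of the paper depends.
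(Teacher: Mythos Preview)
Your construction coincides with the paper's: both set $F^+(U)=\colim_{x\in U}F(U_x)$ with counit induced by the corestrictions $F(U_x)\to F(U)$. The difference is only in how the universal property is verified. The paper argues directly: given $\alpha\colon G\to F$ with $G$ a cosheaf, it uses Lemma~\ref{cosheaf costalk} (the isomorphism $\rho(U)\colon\colim_{x\in U}G(U_x)\xrightarrow{\cong}G(U)$) to write down the lift $\tilde\alpha(U)=\bigl(\colim_{x\in U}\alpha_x\bigr)\circ\rho(U)^{-1}$ and then invokes Lemma~\ref{uniqueness} for uniqueness. You instead factor the argument through the equivalence of Theorem~\ref{correspondence}, obtaining the adjunction bijection $\mathrm{Hom}_{\mathrm{PCoSh}}(\iota G,F)\cong\mathrm{Hom}_{\mathrm{CC}}(\rho G,\tilde\rho F)\cong\mathrm{Hom}_{\mathrm{CoSh}}(G,\widehat F)$. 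The first bijection in your chain is exactly the content of the paper's direct construction (and your diagram-chase sketch of it is correct), so the two proofs unwind to the same computation; your packaging is more categorical and makes the coreflection structure explicit, while the paper's is more self-contained and does not need to cite the full equivalence.
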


This enables us to construct a projective resolution for each cosheaf on a $T_0$-Alexandroff space (Proposition \ref{resolution}), so that we can define its homology as the formal dual of sheaf cohomology. Then we are able to compare the Borel-Moore homology of a cellular cosheaf and the homology of the corresponding cosheaf on a $T_0$-Alexandroff space. We will prove the following answer to Problem \ref{problem}.

\begin{theorem}
  [Theorem \ref{main}]
  \label{main intro}
  Let $F$ be a cellular cosheaf on a simplicial complex, and let $\widehat{F}$ be the corresponding cosheaf on a $T_0$-Alexandroff space. Then the Borel-Moore homology of $F$ is isomorphic to the homology of $\widehat{F}$.
\end{theorem}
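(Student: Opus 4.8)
The strategy is to realize the homology of $\widehat{F}$ by an explicit chain complex built from the face poset of $K$, to filter that complex by cell dimension, and to observe that the resulting spectral sequence collapses onto the Borel--Moore complex of $F$; the simplicial hypothesis enters precisely in guaranteeing this collapse.

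First I would translate the left-hand side into combinatorics. Write $P$ for the face poset of $K$ and $X$ for the associated $T_0$-Alexandroff space. Under the correspondence of Theorem \ref{correspondence}, $\widehat{F}$ is the cosheaf whose value on the minimal open neighbourhood of a cell $\sigma$ is $F(\sigma)$, and the functor sending a cosheaf to its value on $X$ corresponds to the colimit functor $\colim_{P^{\mathrm{op}}}$ on functors $P^{\mathrm{op}}\to\mathbf{Ab}$. Since this colimit functor is right exact, the homology of $\widehat{F}$ --- defined via a projective resolution as in Proposition \ref{resolution} --- is its left derived functors applied to $F$, i.e. the higher colimits $L_\bullet\colim_{P^{\mathrm{op}}}F$. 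These are computed by the bar complex $B_\bullet(F)$, where
\[
  B_p(F)=\bigoplus F(\sigma_p),
\]
the sum running over the strictly increasing chains $\sigma_0<\sigma_1<\cdots<\sigma_p$ of cells, and the differential is the usual alternating sum of face maps: the face that deletes the top cell $\sigma_p$ carries the corestriction $F(\sigma_p)\to F(\sigma_{p-1})$ of $F$, while each of the remaining $p$ faces deletes a lower cell and leaves the coefficient group $F(\sigma_p)$ unchanged. (The chains $\sigma_0<\cdots<\sigma_p$ are the simplices of the barycentric subdivision of $K$, so $B_\bullet(F)$ is the chain complex of the subdivision twisted by $F$.)

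Next I would filter $B_\bullet(F)$ by $F_kB_\bullet(F)=\bigoplus_{\dim\sigma_p\le k}F(\sigma_p)$. The corestriction face strictly lowers $\dim\sigma_p$ and the other faces preserve it, so this is an exhaustive filtration by subcomplexes that is bounded below. In the associated graded complex $\mathrm{gr}_kB_\bullet(F)$ the corestriction face is killed, and the summand indexed by a fixed $k$-cell $\sigma$ becomes $F(\sigma)\otimes\widetilde{C}_{\bullet-1}(L_\sigma)$, where $L_\sigma$ is the order complex of the poset of proper faces of $\sigma$ and $\widetilde{C}$ is the augmented simplicial chain complex (with $\widetilde{C}_{-1}=\Z$). \textbf{This is the step that uses the simplicial hypothesis}: when $\sigma$ is a simplex, $L_\sigma$ is the barycentric subdivision of $\partial\sigma$, hence a sphere $S^{k-1}$ (with $S^{-1}=\varnothing$ for $k=0$), so a short computation gives $H_n(\mathrm{gr}_kB_\bullet(F))=\bigoplus_{\dim\sigma=k}F(\sigma)$ for $n=k$ and $0$ otherwise. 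Hence the spectral sequence of the filtration has $E^1$-page concentrated on the line $q=0$, with $E^1_{k,0}=\bigoplus_{\dim\sigma=k}F(\sigma)$, so it degenerates at $E^2$ and converges without any finiteness assumption.

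It then remains to identify the differential $d^1\colon E^1_{k,0}\to E^1_{k-1,0}$ with the Borel--Moore differential of $F$. By construction $d^1$ is induced by the corestriction face map applied to the fundamental cycles of the spheres $L_\sigma$: on the summand indexed by a $k$-cell $\sigma$ it sends the generator to $\sum_{\tau}[\sigma:\tau]\,F(\tau\le\sigma)$, the sum over the facets $\tau$ of $\sigma$, the sign produced by aggregating the face signs over the maximal chains of $L_\sigma$ through $\tau$ being exactly the incidence number $[\sigma:\tau]$ of $K$. Thus $(E^1_{\bullet,0},d^1)$ is the Borel--Moore chain complex of $F$, so $E^2_{k,0}$ is the $k$-th Borel--Moore homology of $F$, and the degeneration yields a natural isomorphism $H_k(\widehat{F})\cong H_k^{\BM}(F)$. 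Apart from the bookkeeping of the second paragraph, the work is concentrated in the local sphere computation for $\mathrm{gr}_k$ and in pinning down the sign in $d^1$; I expect the former --- together with the remark that it fails for general CW complexes, where $L_\sigma$ need not be a sphere --- to be the conceptual crux.
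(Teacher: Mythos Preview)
Your argument is correct but follows a genuinely different route from the paper. The paper proceeds via \v{C}ech theory: it takes the open cover $\mathcal{U}=\{U_v\}_{v\in V}$ of $\X(P(K))$ by minimal neighbourhoods of vertices, observes that because $K$ is simplicial the nonempty intersections $U_{v_0}\cap\cdots\cap U_{v_n}$ are exactly the sets $U_{[v_0,\ldots,v_n]}$ for simplices of $K$, so that the \v{C}ech complex $\check{C}_*(\mathcal{U};\widehat{F})$ is \emph{literally} the Borel--Moore complex of $F$; it then invokes the Leray-type comparison Theorem~\ref{comparison} after checking the acyclicity of each $U_\sigma$ directly from \eqref{costalk}. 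You instead identify cosheaf homology with the higher colimits of $F$ over $P(K)^{\mathrm{op}}$ (via the equivalence of Theorem~\ref{correspondence}), realize these by the bar complex --- which is nothing but the Borel--Moore complex of the barycentric subdivision $\Delta(P(K))$ with coefficients $\Delta(F)$ --- and then run the skeletal spectral sequence to collapse onto $C_*^{\BM}(K;F)$. The paper's route is shorter and more sheaf-theoretic; yours has the merit of simultaneously supplying a proof of Lemma~\ref{subdivision} (which the paper only cites as the dual of a result in \cite{C}) and of making transparent the link to the Bousfield--Kan higher colimits mentioned after Theorem~\ref{main}. Both approaches use the simplicial hypothesis at the expected point: for the paper it is what makes the nerve of the vertex cover coincide with $K$, while for you it is what makes each link $L_\sigma$ a sphere.
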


We can import a plenty of results in sheaf theory into cosheaf homology if it were defined. Then Theorem \ref{main intro} would benefit especially applied algebraic topology as the Borel-Moore homology of cellular cosheaves are frequently used there.

To each poset $P$ we can associate a simplicial complex called an order complex, and as we will see in Section \ref{Cech homology and Borel-Moore homology}, we can also associate to a cellular cosheaf over a poset a cellular cosheaf on its order complex. Then we can define the Borel-Moore homology for a cellular cosheaf over a poset as the one for a cellular cosheaf on the associated order complex. We will see that this definition is consistent with that for a cellular cosheaf on a regular CW  (Lemma \ref{subdivision}). Then a result for a cellular cosheaf on regular CW complexes analogous to Theorem \ref{main intro} will be also obtained (Theorem \ref{regular}).


\subsection*{Acknowledgement}

The first author was partially supported by JSPS KAKENHI Grant Numbers JP17K05248 and JP19K03473.


\section{Cosheaves on $T_0$-Alexandroff spaces}\label{Cosheaves on Alexandroff spaces}

This section studies a correspondence between cellular cosheaves and cosheaves on $T_0$-Alexandroff spaces. This correspondence is based on the correspondence between posets and $T_0$-Alexandroff spaces, which we recall here. An \emph{Alexandroff space} is a topologuical space in which all intersections of open sets, possibly infinitely many, are open, and a $T_0$-space is a topological space such that given distinct two points, there is an open set which contains exactly one of the two points. Let $X$ be an Alexandroff space. Then for each $x$, there is the smallest open set containing $x$, which we denote by $U_x$. Namely, $U_x$ is the intersection of all open sets containing $x$. The $T_0$-property of an Alexandroff space can be characterized as follows \cite[Proposition 4]{S}.

\begin{lemma}
  \label{antisymmetry}
  An Alexandroff space is a $T_0$-space if and only if $U_x=U_y$ implies $x=y$.
\end{lemma}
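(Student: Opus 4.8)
The plan is to reduce both sides of the equivalence to a statement about how the minimal open sets $U_x$ are ordered by inclusion. The key preliminary observation I would establish is that for any $x, y \in X$ one has $x \in U_y$ if and only if $U_x \subseteq U_y$: the reverse implication is immediate from $x \in U_x$, while the forward implication holds because $U_y$ is then an open set containing $x$, so by minimality of $U_x$ it contains $U_x$. In particular, $U_x = U_y$ if and only if both $x \in U_y$ and $y \in U_x$ hold.

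For the ``only if'' direction I would argue by contradiction: assume $X$ is $T_0$ and $U_x = U_y$, and suppose $x \neq y$. Pick an open set $V$ containing exactly one of the two points, say $x \in V$ and $y \notin V$. Minimality of $U_x$ gives $U_x \subseteq V$, hence $y \notin U_x = U_y$, contradicting $y \in U_y$; the case $y \in V$, $x \notin V$ is symmetric. Therefore $x = y$.

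For the ``if'' direction, assume that $U_x = U_y$ forces $x = y$, and take distinct points $x \neq y$. By the preliminary observation, $x \in U_y$ and $y \in U_x$ cannot both hold, so after possibly swapping $x$ and $y$ we may assume $y \notin U_x$. Then $U_x$ is an open set containing $x$ but not $y$, which is precisely the separation required by the $T_0$ axiom.

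I do not anticipate any real obstacle: the argument is elementary once the characterization $x \in U_y \Leftrightarrow U_x \subseteq U_y$ is in hand. The only points demanding a little care are getting the direction of that inclusion right and keeping track of the roles of $x$ and $y$ in the ``without loss of generality'' step of the converse.
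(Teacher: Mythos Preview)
Your argument is correct and is the standard elementary proof of this characterization. The paper itself does not supply a proof of this lemma; it merely states the result and cites \cite[Proposition 4]{S}, so there is no in-paper proof to compare against.
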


Let $X$ be a $T_0$-Alexandroff space. We define a poset from $X$. For $x,y\in X$, we define $x\le y$ if $U_x\subset U_y$. Clearly, the relation $\le$ satisfies reflexivity and transitivity. By Lemma \ref{antisymmetry}, it also satisfies antisymmetry, and so the relation $\le$ is a partial order on $X$. Let $\P(X)$ denote the poset $(X,\le)$.

Next, we define a $T_0$-Alexandroff space from a poset. Let $P$ be a poset. Then there is a topology $\mathcal{O}$ on $P$ whose open basis consists of $P_{\ge x}=\{y\ge x\mid y\in P\}$ for $x\in P$. Let $\X(P)$ denote the topological space $(X,\mathcal{O})$.

\begin{lemma}
  For a poset $P$, $\X(P)$ is a $T_0$-Alexandroff space.
\end{lemma}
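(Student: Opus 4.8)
The plan is to pin down, for each $x\in P$, the smallest open set $U_x$ containing $x$, and to show that $U_x=P_{\ge x}$. Once this is established, both assertions become formal: the Alexandroff property follows by writing an arbitrary intersection of opens as a union of such minimal opens, and the $T_0$ property follows from antisymmetry of the order on $P$ together with Lemma \ref{antisymmetry}. As a preliminary I would record that $\{P_{\ge x}\}_{x\in P}$ is genuinely an open basis: every point satisfies $x\in P_{\ge x}$, and if $z\in P_{\ge x}\cap P_{\ge y}$ then $z\ge x$ and $z\ge y$, so $z\in P_{\ge z}\subseteq P_{\ge x}\cap P_{\ge y}$.

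The key step is the identification $U_x=P_{\ge x}$. Given any open $U$ with $x\in U$, basis-ness provides some $y$ with $x\in P_{\ge y}\subseteq U$; then $x\ge y$, hence $P_{\ge x}\subseteq P_{\ge y}\subseteq U$. Since $P_{\ge x}$ is itself an open set containing $x$, it is the smallest such, i.e. $U_x=P_{\ge x}$. For the Alexandroff property, let $\{U_i\}_{i\in I}$ be any family of open sets and let $x\in\bigcap_{i\in I}U_i$. For each $i$ we get $P_{\ge x}=U_x\subseteq U_i$, hence $P_{\ge x}\subseteq\bigcap_{i\in I}U_i$, and therefore $\bigcap_{i\in I}U_i=\bigcup_{x\in\bigcap_{i\in I}U_i}P_{\ge x}$ is a union of basic open sets, hence open.

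It remains to check that $\X(P)$ is $T_0$. By Lemma \ref{antisymmetry} it suffices to verify that $U_x=U_y$ implies $x=y$. But $U_x=P_{\ge x}$ and $U_y=P_{\ge y}$, so $U_x=U_y$ gives $x\in P_{\ge y}$ and $y\in P_{\ge x}$, that is, $x\ge y$ and $y\ge x$; antisymmetry of the partial order on $P$ then yields $x=y$. I do not expect any real obstacle here: the only point requiring a moment's thought is recognizing that $P_{\ge x}$ is the minimal open neighbourhood $U_x$, after which the rest is routine.
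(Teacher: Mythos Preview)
Your proof is correct and follows essentially the same route as the paper: both arguments identify $U_x=P_{\ge x}$ and then invoke antisymmetry of $\le$ together with Lemma~\ref{antisymmetry} for the $T_0$ part. The only cosmetic difference is ordering: the paper first characterizes open sets as upper sets to get the Alexandroff property and then reads off $U_x=P_{\ge x}$, whereas you establish $U_x=P_{\ge x}$ first and deduce the Alexandroff property from it; neither organization has any real advantage over the other.
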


\begin{proof}
  Let $\{U_\lambda\}_{\lambda\in\Lambda}$ be a collection of open sets of $\X(P)$. Note that a subset $U\subset P$ is an open set of $\X(P)$ if and only if $x\in P$ belongs to $U$ whenever $x\ge y$ for some $y\in U$. If $x\in P$ satisfies $x\ge y$ for some $y\in\bigcap_{\lambda\in\Lambda}U_\lambda$, then $x\in U_\lambda$ for each $\lambda\in\Lambda$, implying $x\in\bigcap_{\lambda\in\Lambda}U_\lambda$. Thus $\bigcap_{\lambda\in\Lambda}U_\lambda$ is an open set of $\X(P)$, and so $\X(P)$ is an Alexandroff space. Clearly, $U_x=P_{\ge x}$ for each $x\in P$. Then $x\le y$ whenever $U_x\subset U_y$, implying $x=y$ whenever $U_x=U_y$. Thus by Lemma \ref{antisymmetry}, $\X(P)$ is a $T_0$-space, completing the proof.
\end{proof}

By definition, $\P(\X(P))=P$ for each poset $P$ and $\X(\P(X))=X$ for each $T_0$-Alexandroff space $X$. Then we obtain:

\begin{theorem}
  \label{poset-space}
  The assignments $\P$ and $\X$ give a one-to-one correspondence between posets and $T_0$-Alexandroff spaces.
\end{theorem}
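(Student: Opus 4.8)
The plan is to show that $\P$ and $\X$, viewed as maps between posets and $T_0$-Alexandroff spaces, are well-defined and mutually inverse; a map with a two-sided inverse is a bijection, which is precisely the asserted one-to-one correspondence. Well-definedness is already in place: $\X(P)$ is a $T_0$-Alexandroff space by the previous lemma, and $\le$ on $\P(X)$ is a partial order since reflexivity and transitivity are immediate and antisymmetry is Lemma~\ref{antisymmetry}. So the whole content is the pair of identities $\P(\X(P))=P$ for every poset $P$ and $\X(\P(X))=X$ for every $T_0$-Alexandroff space $X$, which the preceding discussion already records as holding essentially by construction.

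To justify those identities one notes that in each composite the underlying set is untouched, so only the partial order, respectively the topology, has to be matched. For $\P(\X(P))=P$: the previous lemma gives that the minimal open neighbourhood of $x$ in $\X(P)$ is $P_{\ge x}$, so the order on $\P(\X(P))$ is $x\le y$ iff $P_{\ge x}\subseteq P_{\ge y}$, and unwinding this inclusion of principal up-sets reproduces the order relation on $P$. For $\X(\P(X))=X$: since $X$ is Alexandroff, every open set is the union of the minimal open sets $U_x$ of its points, so $\{U_x\}_{x\in X}$ is a basis for $X$; hence it is enough to see that the basic open set attached to $x$ in $\X(\P(X))$ is $U_x$, which comes from the elementary fact that $y\in U_x$ iff $U_y\subseteq U_x$, i.e.\ iff $x$ and $y$ are comparable in $\P(X)$ in the relevant direction.

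I do not expect any real obstacle here: the theorem is little more than the bookkeeping statement $\P\circ\X=\mathrm{id}$ and $\X\circ\P=\mathrm{id}$. The only point that needs a bit of care is keeping straight the direction of the inclusions and inequalities when passing from a space to its poset and back.
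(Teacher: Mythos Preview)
Your proposal is correct and follows exactly the paper's approach: the paper simply records that $\P(\X(P))=P$ and $\X(\P(X))=X$ hold ``by definition'' and deduces the theorem, and you have merely spelled out those two identities in more detail. Nothing is missing or done differently.
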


\begin{remark}
  As in \cite{S}, a map $X\to Y$ between $T_0$-Alexandroff spaces is continuous if and only if the corresponding map $\P(X)\to\P(Y)$ is order-preserving. Then Theorem \ref{poset-space} can be extended to an isomorphism between the category of posets and the category of $T_0$-Alexandroff spaces.
\end{remark}

We define cellular cosheaves and cosheaves. Let $\mathbf{Ab}$ denote the category of abelian groups. We will always regard a poset $P$ as a category in which objects are elements of $P$ and there is a unique morphism $x\to y$ whenever $x\le y$.

\begin{definition}
  A \emph{cellular cosheaf} over a poset $P$ is a functor $F\colon P\to\mathbf{Ab}$.
\end{definition}

For a topological space $X$, let $\open(X)$ denote the poset of open sets of $X$. Let $\N(\mathcal{U})$ denote the nerve of an open cover $\mathcal{U}$.

\begin{definition}
  A \emph{precosheaf} on a topological space $X$ is a functor $F\colon\open(X)\to\mathbf{Ab}$. If the natural map
  \[
    \underset{\N(\mathcal{U})}{\colim}\,F\to F(U)
  \]
  is an isomorphism for each open set $U$ and each open cover $\mathcal{U}$ of $U$, then $F$ is called a \emph{cosheaf}.
\end{definition}

We extend the correspondence of Theorem \ref{poset-space} to cellular cosheaves and cosheaves. Let $F$ be a cellular cosheaf over a poset $P$. We define a precosheaf $\widehat{F}$ on $\X(P)$ by
\[
  \widehat{F}(U)=\underset{x\in U}{\colim}\,F(x)
\]
for each open set $U$ of $\X(P)$, where we regard $U$ as a subposet of $P$ on the RHS.

\begin{lemma}
  \label{|F|}
  Let $F$ be a cellular cosheaf over a poset $P$. Then $\widehat{F}$ is a cosheaf.
\end{lemma}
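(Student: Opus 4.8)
The plan is to verify the cosheaf axiom directly, reducing it to the cofinality of an explicit functor. That $\widehat F$ is a precosheaf is immediate: for open sets $U\subseteq V$ of $\X(P)$ the subposet inclusion $U\hookrightarrow V$ induces $\widehat F(U)=\colim_{x\in U}F(x)\to\colim_{x\in V}F(x)=\widehat F(V)$, functorially. So fix an open set $U$ of $\X(P)$ and an open cover $\mathcal U=\{U_\lambda\}_{\lambda\in\Lambda}$ of $U$; I must show that the canonical map $\colim_{\N(\mathcal U)}\widehat F\to\widehat F(U)$ is an isomorphism.

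For a simplex $\sigma$ of $\N(\mathcal U)$, that is, a finite $\sigma\subseteq\Lambda$ with $U_\sigma:=\bigcap_{\lambda\in\sigma}U_\lambda\neq\emptyset$, unwinding the definition shows that $\colim_{\N(\mathcal U)}\widehat F$ is the colimit of the diagram $\sigma\mapsto\widehat F(U_\sigma)$ indexed by the poset of simplices of $\N(\mathcal U)$, in which a face inclusion $\tau\subseteq\sigma$ (so $U_\sigma\subseteq U_\tau$) induces the structure map $\widehat F(U_\sigma)\to\widehat F(U_\tau)$. Substituting $\widehat F(U_\sigma)=\colim_{x\in U_\sigma}F(x)$ and using that colimits commute with colimits, this is $\colim_{\mathcal E}(F\circ p)$, where $\mathcal E$ is the (thin) category whose objects are the pairs $(\sigma,x)$ with $\sigma$ a simplex of $\N(\mathcal U)$ and $x\in U_\sigma$, with a morphism $(\sigma,x)\to(\tau,y)$ exactly when $\tau\subseteq\sigma$ and $x\le y$ in $P$ (note then $x\in U_\sigma\subseteq U_\tau$), and $p\colon\mathcal E\to U$ is the projection $(\sigma,x)\mapsto x$, the target $U$ being regarded as a subposet of $P$. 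Tracing through the identifications, the canonical map $\colim_{\N(\mathcal U)}\widehat F\to\widehat F(U)=\colim_{x\in U}F(x)$ is exactly the map on colimits induced by $p$.

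Hence it suffices to show that $p$ is cofinal, for then the induced map is an isomorphism; concretely, I must check that for every $x\in U$ the comma category $x\downarrow p$, whose objects are the pairs $(\sigma,y)$ as above with $x\le y$, is non-empty and connected. Non-emptiness: since $\mathcal U$ covers $U$ there is $\lambda$ with $x\in U_\lambda$, and then $(\{\lambda\},x)$ together with the identity $x\to x$ is an object. Connectedness rests on two observations: (i) each $U_\sigma$, being open in $\X(P)$, is a down-set of $P$, so $x\le y$ and $y\in U_\sigma$ force $x\in U_\sigma$; and (ii) if $x\in U_\sigma\cap U_\tau$ then $x\in U_\nu$ for all $\nu\in\sigma\cup\tau$, so $U_{\sigma\cup\tau}\neq\emptyset$ and $\sigma\cup\tau$ is again a simplex of $\N(\mathcal U)$. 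Given two objects $(\sigma,y),(\tau,z)$ of $x\downarrow p$, observation (i) places $(\sigma,x)$ and $(\tau,x)$ in $\mathcal E$ and (ii) places $(\sigma\cup\tau,x)$ there, and then the zig-zag
\[
(\sigma,y)\ \longleftarrow\ (\sigma,x)\ \longleftarrow\ (\sigma\cup\tau,x)\ \longrightarrow\ (\tau,x)\ \longrightarrow\ (\tau,z)
\]
lies entirely in $x\downarrow p$ and connects them.

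Most of the write-up is then the routine bookkeeping of arrow directions in $\N(\mathcal U)$ and in $\mathcal E$ and the check that the comparison map is induced by $p$. The one step that genuinely uses the hypotheses, and the one I would present most carefully, is observation (i): it is precisely here that the Alexandroff structure of $\X(P)$ enters, via the fact that the open sets are the down-sets of $P$ (equivalently, that $U_x$ is the principal down-set at $x$), which is what makes the comma categories connected.
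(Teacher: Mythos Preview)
Your cofinality strategy is sound, but observation (i) is false under the paper's conventions and your zig-zag breaks. In this paper the basic open sets of $\X(P)$ are the sets $P_{\ge x}$, so open sets are \emph{up}-sets and $U_x=P_{\ge x}$ is the principal up-set at $x$, not the down-set. Hence from $y\in U_\sigma$ and $x\le y$ you cannot conclude $x\in U_\sigma$; the object $(\sigma,x)$ in your zig-zag need not lie in $\mathcal E$, and the connectedness argument fails as written. (Concretely: take $P=\{a<b\}$, $U=P$, $\mathcal U=\{U_1,U_2\}=\{\{b\},P\}$; with $\sigma=\{1\}$, $y=b$, $x=a$ one has $a\notin U_\sigma=\{b\}$, so $(\sigma,x)$ does not exist.)

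The repair is immediate once you use the up-set property in the correct direction. Choose $\lambda$ with $x\in U_\lambda$; since $U_\lambda$ is an up-set and $x\le y$, $x\le z$, we get $y\in U_\sigma\cap U_\lambda=U_{\sigma\cup\{\lambda\}}$ and likewise $z\in U_{\tau\cup\{\lambda\}}$. Then
\[
(\sigma,y)\ \leftarrow\ (\sigma\cup\{\lambda\},y)\ \rightarrow\ (\{\lambda\},y)\ \leftarrow\ (\{\lambda\},x)\ \rightarrow\ (\{\lambda\},z)\ \leftarrow\ (\tau\cup\{\lambda\},z)\ \rightarrow\ (\tau,z)
\]
is a zig-zag entirely in $x\downarrow p$, establishing connectedness. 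With this correction the argument goes through and gives a cleaner categorical route than the paper's proof, which instead writes the double colimit explicitly as a quotient of $\bigoplus_{x\in U}\bigoplus_{I_x}F(x)$ and identifies it with $\colim_{x\in U}F(x)$ by inspection.
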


\begin{proof}
  Let $U$ be an open set of $\X(P)$, and let $\mathcal{U}$ be an open cover of $U$. By definition, $\underset{V\in\N(\mathcal{U})}{\colim}\underset{x\in V}{\colim}\,F(x)$ is the quotient
  \[
    \bigoplus_{x\in U}\bigoplus_{I_x}F(x)/\sim
  \]
  in which copies of $F(x)$ are identified and $a\sim b$ if $F(x<y)(a)=b$ for $a\in F(x)$ and $b\in F(y)$, where $I_x$ is the set of elements of $\N(\mathcal{U})$ including $U_x$. Then
  \[
    \underset{V\in\N(\mathcal{U})}{\colim}\underset{x\in V}{\colim}\,F(x)\cong\underset{x\in U}{\colim}\,F(x)
  \]
  and so we get
  \begin{equation}
    \label{hat-F}
    \underset{\N(\mathcal{U})}{\colim}\,\widehat{F}=\underset{V\in\N(\mathcal{U})}{\colim}\underset{x\in V}{\colim}\,F(x)\cong\underset{x\in U}{\colim}\,F(x)=\widehat{F}(U).
  \end{equation}
  Thus $\widehat{F}$ is a cosheaf, as stated.
\end{proof}

Let $P$ be a poset. Then there is an injective order-preserving map
\[
  \iota\colon P\to\open(\X(P)),\quad x\mapsto U_x.
\]
Note that for a cosheaf $F$ on $\X(P)$, $\iota^*(F)$ is a cellular cosheaf over a poset $P$. Let $\mathbf{Cosh}(X)$ denote the category of cosheaves on a topological space $X$. Now we are ready to prove:

\begin{theorem}
  \label{correspondence}
  For a poset $P$, the functor
  \[
    [P,\mathbf{Ab}]\to\mathbf{Cosh}(\X(P)),\quad F\mapsto\widehat{F}
  \]
  is an equivalence of categories.
\end{theorem}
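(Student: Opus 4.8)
My plan is to show that the functor $\iota^{*}\colon\mathbf{Cosh}(\X(P))\to[P,\mathbf{Ab}]$ of restriction along the order-embedding $\iota\colon P\to\open(\X(P))$, $x\mapsto U_{x}$, is a quasi-inverse to $F\mapsto\widehat{F}$. Note that $\iota$ is fully faithful, i.e. $U_{x}\subseteq U_{y}$ if and only if $x\le y$, by Theorem \ref{poset-space}. The key observation is that $\widehat{F}$ is exactly the left Kan extension of $F$ along $\iota$: since $x\in U$ if and only if $U_{x}\subseteq U$, the pointwise colimit formula gives $(\mathrm{Lan}_{\iota}F)(U)=\colim_{x\,:\,U_{x}\subseteq U}F(x)=\colim_{x\in U}F(x)=\widehat{F}(U)$ for every open set $U$. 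Hence $F\mapsto\widehat{F}$, regarded a priori as a functor into precosheaves, is left adjoint to $\iota^{*}$; by Lemma \ref{|F|} it takes values in cosheaves, and since $\mathbf{Cosh}(\X(P))$ is a full subcategory of the precosheaf category, the adjunction descends to functors $\widehat{(-)}\colon[P,\mathbf{Ab}]\rightleftarrows\mathbf{Cosh}(\X(P))\colon\iota^{*}$. It then suffices to prove that the unit and counit of this adjunction are isomorphisms, which exhibits $\widehat{(-)}$ as an (adjoint) equivalence.

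The unit $\eta_{F}\colon F\to\iota^{*}\widehat{F}$ is an isomorphism because $\iota$ is fully faithful; concretely, $\iota^{*}\widehat{F}(x)=\widehat{F}(U_{x})=\colim_{y\in U_{x}}F(y)=F(x)$, since $x$ is the greatest element of the subposet $U_{x}$, and this identification is natural in $x$ and $F$. So the whole matter reduces to showing that the counit $\varepsilon_{G}\colon\widehat{\iota^{*}G}\to G$ is an isomorphism for every cosheaf $G$ — equivalently, that the canonical map $\colim_{x\in U}G(U_{x})\to G(U)$ assembled from the structure maps $G(U_{x})\to G(U)$, $x\in U$, is an isomorphism for every open $U$.

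To prove this I would apply the cosheaf axiom for $G$ to the cover $\mathcal{U}=\{U_{x}\mid x\in U\}$ of $U$, which identifies $G(U)$ with $\colim_{\N(\mathcal{U})}G$; the counit component at $U$ then factors as a canonical comparison $\psi\colon\colim_{x\in U}G(U_{x})\to\colim_{\N(\mathcal{U})}G$ followed by this identification, so it is enough to see that $\psi$ is an isomorphism. Both groups are explicit quotients of $\bigoplus_{x\in U}G(U_{x})$: the source by the relations $a\sim G(U_{x}\subseteq U_{y})(a)$ for $x\le y$, and — since the colimit over the face poset of a simplicial complex is determined by its $1$-skeleton, by a computation as in the proof of Lemma \ref{|F|} — the target by the relations $G(U_{x}\cap U_{x'}\subseteq U_{x})(\xi)\sim G(U_{x}\cap U_{x'}\subseteq U_{x'})(\xi)$ over edges $\{x,x'\}$ of $\N(\mathcal{U})$ and $\xi\in G(U_{x}\cap U_{x'})$. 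One containment of the two relation subgroups is immediate (take $y=x'$ and $\xi\in G(U_{x}\cap U_{x'})=G(U_{x})$ when $x\le x'$). For the reverse containment I would invoke the cosheaf axiom a second time, now for $G$ on the open set $W=U_{x}\cap U_{x'}$ with its cover $\{U_{y}\mid y\in W\}$: this writes an arbitrary $\xi\in G(W)$ as a finite sum of images of elements $\xi_{y}\in G(U_{y})$, after which functoriality of $G$ rewrites the edge relation for $\xi$ as a combination of the relations $a\sim G(U_{y}\subseteq U_{z})(a)$ already present in the source. Hence the two quotients coincide, $\psi$ and $\varepsilon_{G}$ are isomorphisms, and the proof is complete.

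The main obstacle is this last step. The difficulty is that the basis $\{U_{x}\}_{x\in P}$ of $\X(P)$ need not be closed under finite intersections — $U_{x}\cap U_{x'}$ is generally not of the form $U_{z}$ — so one cannot simply quote a reconstruction theorem for cosheaves defined on a basis, and must instead reinsert the cosheaf axiom on the intersections $U_{x}\cap U_{x'}$ in order to match the defining relations of the two colimits. The remaining ingredients — the Kan-extension description of $\widehat{(-)}$, the unit being an isomorphism, and the reduction of the nerve colimit to its $1$-skeleton — are routine.
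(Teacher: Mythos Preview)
Your argument is correct and follows the same strategy as the paper's proof: exhibit $\iota^{*}$ as a quasi-inverse by checking that $\iota^{*}\widehat{F}\cong F$ (since $x$ is terminal in $U_{x}$) and that the canonical map $\colim_{x\in U}G(U_{x})\to G(U)$ is an isomorphism for every cosheaf $G$, the latter by applying the cosheaf axiom to the cover $\{U_{x}\}_{x\in U}$ and then reapplying it on the intersections. Your Kan-extension/adjunction framing and the explicit reduction of the nerve colimit to its $1$-skeleton before invoking the axiom on $U_{x}\cap U_{x'}$ are a cleaner packaging of what the paper writes as an iterated expansion (with ``$\vdots$'') of the terms $G(U_{x_{0}\cdots x_{n}})$, but the substance is identical.
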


\begin{proof}
  We show that the functor $\iota^*\colon\mathbf{Cosh}(\X(P))\to[P,\mathbf{Ab}]$ is the inverse of the functor $\widehat{\cdot}$. Let $F\in[P,\mathbf{Ab}]$, up to natural isomorphism. Since $x\in P$ is the terminal object of $U_x$,
  \[
    \iota^*(\widehat{F})(x)=\widehat{F}(U_x)=\underset{y\in U_x}{\colim}\,F(y)\cong F(x),
  \]
  implying $\iota^*\circ\widehat{\cdot}=1$. Let $G\in\mathbf{Cosh}(\X(P))$. For $U\in\open(\X(P))$, let $\mathcal{U}_U=\{U_x\}_{x\in U}$ be an open cover of $U$, and let $U_{x_0\ldots x_n}=U_{x_0}\cap\cdots\cap U_{x_n}$. Since $G$ is a cosheaf,
  \begin{align*}
    G(U)&\cong\underset{\N(\mathcal{U}_U)}{\colim}\,G\\
    &=\left(\bigoplus_{\substack{x_0,\ldots,x_{n_1}\in U\\n_1\ge 0}}G(U_{x_0\cdots x_{n_1}})\right)/\sim\\
    &=\left(\bigoplus_{x_\in U}G(U_{x})\bigoplus_{\substack{x_0,\ldots,x_{n_1}\in U\\n_1\ge 1}}G(U_{x_0\cdots x_{n_1}})\right)/\sim\\
    &\cong\left(\bigoplus_{x\in U}G(U_{x})\bigoplus_{\substack{x_0,\ldots,x_{n_1}\in U\\n_1\ge 1}}\underset{\N(\mathcal{U}_{U_{x_0\cdots x_{n_1}}})}{\colim}\,G\right)/\sim\\
    &\cong\left(\bigoplus_{x\in U}G(U_{x})\bigoplus_{\substack{x_0,\ldots,x_{n_1}\in U\\n_1\ge 1}}\bigoplus_{\substack{y_0,\ldots,y_{n_2}\in U_{x_0\cdots x_{n_1}}\\n_2\ge 0}}G(U_{y_0\cdots y_{n_2}})\right)/\sim\\
    &\cong\left(\bigoplus_{x\in U}G(U_{x})\bigoplus_{\substack{x_0,\ldots,x_{n_1}\in U\\n_1\ge 1}}\bigoplus_{y\in U_{x_0\cdots x_{n_1}}}G(U_y)\bigoplus_{\substack{y_0,\ldots,y_{n_2}\in U_{x_0\cdots x_{n_1}}\\n_2\ge 1}}\underset{\N(\mathcal{U}_{U_{y_0\cdots y_{n_2}}})}{\colim}\,G\right)/\sim\\
    &\;\;\vdots\\
    &\cong\left(\bigoplus_{x\in U}\bigoplus_{J_x}G(U_x)\right)/\sim
  \end{align*}
  where in the last term, copies of $G(U_x)$ are identified and $a\sim b$ if $G(U_x<U_y)(a)=b$ for $a\in G(U_x)$ and $b\in G(U_y)$. Then we get
  \begin{equation}
    \label{colim U_x}
    G(U)\cong\underset{x\in U}{\colim}\,G(U_x)
  \end{equation}
  and so we obtain
  \[
    \widehat{\iota^*(G)}(U)=\underset{x\in U}{\colim}\,\iota^*(G)(x)=\underset{x\in U}{\colim}\,G(U_x)\cong G(U),
  \]
  implying $\widehat{\cdot}\circ\iota^*=1$. Therefore the proof is complete.
\end{proof}


\section{Defining cosheaf homology}\label{Defining cosheaf homology}

This section proves that we can define the homology of a cosheaf on a $T_0$-Alexandroff space as the formal dual of sheaf cohomology. Let $F$ be a precosheaf on a topological space $X$. The \emph{costalk} of $F$ at $x\in X$ is defined by
\[
  F_x=\underset{\N(x)}{\lim}\,F,
\]
where $\N(x)$ denotes the subposet of $\open(X)$ consisting of all open sets containing $x$. If $X$ is an Alexandroff space, then $U_x$ is the minimal element of $\N(x)$, implying
\begin{equation}
  \label{costalk}
  F_x\cong F(U_x).
\end{equation}
Then we need not take limits to get costalks. This simple property will be a key in our study. There is a natural map
\begin{equation}
  \label{rho}
  \rho(U)\colon\underset{x\in U}{\colim}\,F(U_x)\to F(U)
\end{equation}
induced from the inclusions $U_x\to U$ for $x\in U$. By \eqref{colim U_x}, we have:

\begin{lemma}
  \label{cosheaf costalk}
  Let $F$ be a cosheaf on a $T_0$-Alexandroff space $X$. Then for each open set $U$ of $X$, the natural map \eqref{rho} is an isomorphism.
\end{lemma}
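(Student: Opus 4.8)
The plan is to read off the statement directly from equation~\eqref{colim U_x}, which was obtained in the course of proving Theorem~\ref{correspondence}. By Theorem~\ref{poset-space} every $T_0$-Alexandroff space $X$ equals $\X(\P(X))$, so the iterated colimit computation carried out there for a cosheaf $G$ on $\X(P)$ applies verbatim to $F$ on $X$ and yields an isomorphism $F(U)\cong\colim_{x\in U}F(U_x)$ for every open set $U$ of $X$. What remains is to verify that this isomorphism is precisely the natural map $\rho(U)$ of~\eqref{rho}.

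To do this I would unwind how the isomorphism in~\eqref{colim U_x} is built. It is produced by repeatedly applying the cosheaf condition to the cover $\mathcal{U}_U=\{U_x\}_{x\in U}$ of $U$ and to the analogous covers of the finite intersections $U_{x_0\cdots x_n}$, each of which is open because $X$ is Alexandroff. Every identification used along the way is induced by a structure map of $F$ attached to an inclusion $U_x\subseteq V$ of basic open sets, and composing such inclusions up to $U$ shows that the summand $F(U_x)$ of the colimit is carried into $F(U)$ through $F(U_x\subseteq U)$. These are exactly the components of the canonical cocone from $\colim_{x\in U}F(U_x)$ to $F(U)$, so by the universal property of the colimit the isomorphism of~\eqref{colim U_x} coincides with $\rho(U)$, and hence $\rho(U)$ is an isomorphism.

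The only real work is this compatibility bookkeeping: there is no new ingredient beyond the cosheaf axiom and the Alexandroff identity~\eqref{costalk} already recorded, so I expect it to be the main — and essentially the sole — obstacle. If one prefers not to reproduce the iteration, an alternative is to observe that $\rho(U)$ is by construction the map induced on colimits by the cocone $\{U_x\to U\}_{x\in U}$, factor it through $\colim_{\N(\mathcal{U}_U)}F$ via the cofinal comparison, and identify this object with both the source and target; this again reduces to the same identification.
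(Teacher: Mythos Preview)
Your proposal is correct and follows exactly the paper's approach: the paper's entire argument for this lemma is the one-line citation ``By \eqref{colim U_x}, we have:'' preceding the statement. Your additional verification that the isomorphism produced there agrees with the specific map $\rho(U)$ is more careful than what the paper records, but it is routine bookkeeping and does not change the route.
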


Let $F,G$ be cosheaves on a topological space $X$, and let $\alpha\colon F\to G$ be a homomorphism, that is, a natural transformation. Then there is the induced map $\alpha_x\colon F_x\to G_x$ for each $x\in X$. If $X$ is an Alexandroff space, then $\alpha_x=\alpha(U_x)$ by \eqref{costalk}. The following two lemmas will be quite useful.

\begin{lemma}
  \label{uniqueness}
  Let $F,G$ be cosheaves on a $T_0$-Alexandroff space $X$. For homomorphisms $\alpha,\beta\colon F\to G$, $\alpha=\beta$ if and only if $\alpha_x=\beta_x\colon F_x\to G_x$ for each $x\in X$.
\end{lemma}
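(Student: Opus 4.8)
The statement to prove is Lemma \ref{uniqueness}: for cosheaves $F,G$ on a $T_0$-Alexandroff space $X$ and homomorphisms $\alpha,\beta\colon F\to G$, we have $\alpha=\beta$ if and only if $\alpha_x=\beta_x$ for all $x\in X$. The forward direction is immediate, since $\alpha_x$ is defined from $\alpha$ (and concretely $\alpha_x=\alpha(U_x)$ by the remark following \eqref{costalk}), so $\alpha=\beta$ trivially gives $\alpha_x=\beta_x$. The content is the converse: knowing the maps agree on all costalks — equivalently, by \eqref{costalk}, on $F(U_x)$ for every $x$ — forces $\alpha(U)=\beta(U)$ for every open set $U$.

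For the converse, the idea is to exploit Lemma \ref{cosheaf costalk}: for any open set $U$, the natural map $\rho(U)\colon\colim_{x\in U}F(U_x)\to F(U)$ is an isomorphism, and likewise for $G$. By naturality of $\alpha$ (and of $\beta$) with respect to the inclusions $U_x\hookrightarrow U$, we get a commutative square relating $\rho_F(U)$, $\rho_G(U)$, $\alpha(U)$, and the colimit of the maps $\alpha(U_x)=\alpha_x$. Concretely, the map $\alpha(U)\colon F(U)\to G(U)$ corresponds, under the isomorphisms $\rho_F(U)$ and $\rho_G(U)$, to the map $\colim_{x\in U}\alpha(U_x)\colon\colim_{x\in U}F(U_x)\to\colim_{x\in U}G(U_x)$ induced on colimits by the family $\{\alpha(U_x)\}_{x\in U}$. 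Since by hypothesis $\alpha(U_x)=\alpha_x=\beta_x=\beta(U_x)$ for every $x\in U$, the induced maps on colimits coincide, and then transporting back along the isomorphisms $\rho_F(U),\rho_G(U)$ gives $\alpha(U)=\beta(U)$. As $U$ was arbitrary, $\alpha=\beta$.

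The steps in order: (1) dispatch the trivial forward direction; (2) fix an open set $U$ and write down the commuting square expressing that $\alpha(U)$ is compatible with the colimit cocones $\{F(U_x)\to F(U)\}$ and $\{G(U_x)\to G(U)\}$ — this is just naturality of $\alpha$ applied to each inclusion $U_x\subset U$, together with the universal property of the colimit; (3) invoke Lemma \ref{cosheaf costalk} to say $\rho_F(U)$ and $\rho_G(U)$ are isomorphisms, so $\alpha(U)$ is determined by the family $\{\alpha(U_x)\}_{x\in U}$; (4) use the hypothesis $\alpha_x=\beta_x$ together with $\alpha_x=\alpha(U_x)$, $\beta_x=\beta(U_x)$ from \eqref{costalk} to conclude the two induced colimit maps agree, hence $\alpha(U)=\beta(U)$.

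I do not expect a serious obstacle; the only point requiring minor care is the compatibility in step (2) — making precise that a cosheaf morphism $\alpha$ restricted along the cover $\{U_x\}_{x\in U}$ is literally $\colim_{x\in U}\alpha(U_x)$ under the identification $\rho(U)$. This is a formal consequence of $\alpha$ being a natural transformation between functors on $\open(X)$ together with functoriality of $\colim$, so it can be stated in one or two lines without computation. The substance of the lemma is really just "a cosheaf map is determined by its effect on the minimal opens $U_x$," which is exactly what Lemma \ref{cosheaf costalk} sets up.
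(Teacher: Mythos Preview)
Your proposal is correct and follows essentially the same approach as the paper: both use Lemma \ref{cosheaf costalk} to identify $\alpha(U)$ (resp.\ $\beta(U)$) with $\colim_{x\in U}\alpha(U_x)$ (resp.\ $\colim_{x\in U}\beta(U_x)$), and then invoke $\alpha_x=\alpha(U_x)$ to conclude. Your write-up is in fact slightly more careful than the paper's, which swaps the labels ``if'' and ``only if'' and does not spell out the naturality square explicitly.
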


\begin{proof}
  The if part is obvious, and so we prove the only if part. Let $U$ be an open set of $X$, and let $\mathcal{U}=\{U_x\}_{x\in U}$ be an open cover of $U$. 
  By Lemma \ref{cosheaf costalk}, the maps
  \[
    \underset{x\in U}{\colim}\,\alpha(U_x),\,\underset{x\in U}{\colim}\,\beta(U_x)\colon\underset{x\in U}{\colim}\,F(U_x)\to\underset{x\in U}{\colim}\,G(U_x)
  \]
  are identified with $\alpha(U)$ and $\beta(U)$, respectively. Thus since $\alpha_x=\alpha(U_x)$ and $\beta_x=\beta(U_x)$, we get $\alpha=\beta$ whenever $\alpha_x=\beta_x$ for each $x\in X$. Therefore the proof is finished.
\end{proof}

Now we define the cosheafification of a precosheaf as the formal dual of the sheafification. Let $F$ be a precosheaf on a topological space $X$. The \emph{cosheafification} of $F$ is a cosheaf $F^+$ on $X$ such that there is a homomorphism $\pi\colon F^+\to F$ satisfying that for any homomorphism $\alpha\colon G\to F$ for a cosheaf $G$, there is a unique homomorphism $\tilde{\alpha}\colon G\to F^+$ satisfying a commutative diagram
\[
  \xymatrix{
    &F^+\ar[d]^\pi\\
    G\ar[r]_\alpha\ar@/^15pt/[ru]^{\tilde{\alpha}}&F.
  }
\]

\begin{theorem}
  \label{cosheafification}
  Every precosheaf on a $T_0$-Alexandroff space admits the cosheafification.
\end{theorem}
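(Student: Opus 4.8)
The plan is to construct $F^{+}$ explicitly from the $\widehat{\cdot}$-construction of Section \ref{Cosheaves on Alexandroff spaces}, so that the theorem becomes a formal consequence of Theorem \ref{correspondence} together with Lemmas \ref{cosheaf costalk} and \ref{uniqueness}. Write $P=\P(X)$ and let $\iota\colon P\to\open(X),\ x\mapsto U_{x}$, be the order-preserving injection. Given a precosheaf $F$ on $X$, I would set
\[
  F^{+}:=\widehat{\iota^{*}F},\qquad\text{so that}\qquad F^{+}(U)=\underset{x\in U}{\colim}\,F(U_{x})=\underset{x\in U}{\colim}\,F_{x},
\]
which is a cosheaf by Lemma \ref{|F|} since $\iota^{*}F\colon P\to\mathbf{Ab}$ is a cellular cosheaf. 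For the structure map $\pi\colon F^{+}\to F$ I take the natural transformation $\rho$ of \eqref{rho}: on an open set $U$ it is the map $\colim_{x\in U}F(U_{x})\to F(U)$ induced by the inclusions $U_{x}\hookrightarrow U$. The first thing to check is that $\pi$ really is a morphism of precosheaves, i.e.\ natural in $U$; this is a routine diagram chase from the functoriality of $F$ and the universal property of the colimits defining $F^{+}$.

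The one place where the Alexandroff hypothesis is genuinely used is the claim that $\iota^{*}\pi\colon\iota^{*}F^{+}\to\iota^{*}F$ is an isomorphism. Indeed, by \eqref{costalk} costalks require no limit, and in the subposet $U_{x}\subset P$ the element $x$ is terminal (as in the proof of Theorem \ref{correspondence}), so $F^{+}(U_{x})=\colim_{y\in U_{x}}F(U_{y})$ is canonically identified with $F(U_{x})$, and under this identification $\pi(U_{x})$ becomes the identity of $F(U_{x})$. In particular $\iota^{*}F^{+}\cong\iota^{*}F$ as cellular cosheaves.

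For the universal property, let $G$ be a cosheaf on $X$ and $\alpha\colon G\to F$ a homomorphism. Restricting along $\iota$ gives $\iota^{*}\alpha\colon\iota^{*}G\to\iota^{*}F$ in $[P,\mathbf{Ab}]$, which we view as a map $\iota^{*}G\to\iota^{*}F^{+}$ via the identification above. Since $\iota^{*}\colon\mathbf{Cosh}(X)\to[P,\mathbf{Ab}]$ is an equivalence by Theorem \ref{correspondence}, hence fully faithful, there is a unique homomorphism of cosheaves $\tilde\alpha\colon G\to F^{+}$ with $\iota^{*}\tilde\alpha=\iota^{*}\alpha$ under this identification. Then $\iota^{*}(\pi\circ\tilde\alpha)=(\iota^{*}\pi)\circ(\iota^{*}\tilde\alpha)=\iota^{*}\alpha$ because $\iota^{*}\pi$ is that identification, so $\pi\circ\tilde\alpha=\alpha$ by the argument of Lemma \ref{uniqueness} — which uses only that the source $G$ is a cosheaf, and hence applies verbatim when the target is a mere precosheaf. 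Uniqueness of $\tilde\alpha$ is likewise immediate: if $\pi\circ\tilde\alpha'=\alpha$ as well, applying $\iota^{*}$ and using that $\iota^{*}\pi$ is an isomorphism gives $\iota^{*}\tilde\alpha'=\iota^{*}\tilde\alpha$, whence $\tilde\alpha'=\tilde\alpha$.

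I expect no substantive difficulty beyond the results already in hand (Lemma \ref{|F|}, Theorem \ref{correspondence}, Lemma \ref{uniqueness}); the only real work is bookkeeping — verifying naturality of $\pi$, verifying that $\iota^{*}\pi$ is an isomorphism, and matching the natural isomorphisms $\iota^{*}F^{+}\cong\iota^{*}F$ and $\widehat{\iota^{*}G}\cong G$ with the structure maps. Conceptually, what is happening is that $\widehat{\cdot}$ is the left Kan extension along $\iota$, hence left adjoint to $\iota^{*}$; composing this adjunction with the adjoint equivalence of Theorem \ref{correspondence} exhibits $F\mapsto\widehat{\iota^{*}F}$ as a right adjoint of the inclusion $\mathbf{Cosh}(X)\hookrightarrow\mathbf{PreCosh}(X)$, with counit exactly $\pi$, and the universal property of the cosheafification is precisely the universal property of that counit.
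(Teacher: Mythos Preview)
Your proof is correct and follows essentially the same approach as the paper: the construction $F^{+}(U)=\colim_{x\in U}F(U_{x})$ and the structure map $\pi=\rho$ are identical, and both arguments invoke Lemma~\ref{cosheaf costalk} for existence of the lift and Lemma~\ref{uniqueness} for its uniqueness. The only difference is packaging: the paper writes down $\tilde\alpha(U)=\bigl(\colim_{x\in U}\alpha_{x}\bigr)\circ\rho(U)^{-1}$ explicitly, whereas you obtain $\tilde\alpha$ by invoking the full faithfulness of $\iota^{*}$ from Theorem~\ref{correspondence}; unwinding your argument recovers exactly the paper's formula, and your observation that Lemma~\ref{uniqueness} needs only the source to be a cosheaf is correct and is implicitly what the paper uses as well.
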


\begin{proof}
  Let $F$ be a precosheaf on a $T_0$-Alexandroff space $X$. We define a precosheaf $F^+$ on $X$ by
  \[
    F^+(U)=\underset{x\in U}{\colim}\,F(U_x)
  \]
  for an open set $U$ of $X$. Let $\mathcal{U}$ be an open cover of an open set $U$ of $X$. As in \eqref{hat-F}, we have
  \[
    \underset{\N(\mathcal{U})}{\colim}\,F^+=\underset{V\in\N(\mathcal{U})}{\colim}\,F^+(V)=\underset{V\in\N(\mathcal{U})}{\colim}\underset{x\in V}{\colim}\,F(U_x)\cong\underset{x\in U}{\colim}\,F(U_x)=F^+(U).
  \]
  Then $F^+$ is a cosheaf. We define a homomorphism $\pi\colon F^+\to F$ by $\pi(U)=\rho(U)$, where $\rho(U)$ is as in \eqref{rho}. Let $\alpha\colon G\to F$ be a homomorphism from a cosheaf $G$. Then there is a commutative diagram
  \[
    \xymatrix{
      \underset{x\in U}{\colim}\,G(U_x)\ar[r]^{\underset{x\in U}{\colim}\,\alpha_x}\ar[d]_{\rho(U)}&\underset{x\in U}{\colim}\,F(U_x)\ar[d]^{\rho(U)}\ar@{=}[r]&F^+(U)\ar[d]^{\pi(U)}\\
      G(U)\ar[r]^{\alpha(U)}&F(U)\ar@{=}[r]&F(U),
    }
  \]
  where the maps $\rho$ are as in \eqref{rho}. The left $\rho(U)$ is an isomorphism by Lemma \ref{cosheaf costalk}, so that we can define a map
  \[
    \tilde{\alpha}(U)\colon G(U)\xrightarrow[\cong]{\rho(U)^{-1}}\underset{x\in U}{\colim}\,G(U_x)\xrightarrow{\underset{x\in U}{\colim}\,\alpha_x}\underset{x\in U}{\colim}\,F(U_x)=F^+(U)
  \]
  which is natural with respect to $U$. Thus we get a homomorphism $\tilde{\alpha}\colon G\to F^+$. Clearly, we have $\pi\circ\tilde{\alpha}=\alpha$. Since $\pi_x\colon F_x^+\to F_x$ is a natural isomorphism such that $\pi_x\circ\tilde{\alpha}_x=\alpha_x$, the uniqueness of $\tilde{\alpha}$ follows from Lemma \ref{uniqueness}, completing the proof.
\end{proof}

For a homomorphism $\alpha\colon F\to G$ of cosheaves on a topological space $X$, we define a precosheaf $\Ker\alpha$ on $X$ by
\[
  (\Ker\alpha)(U)=\Ker\{\alpha(U)\colon F(U)\to G(U)\}
\]
for an open set $U$ of $X$. The important point here is that the precosheaf $\Ker\alpha$ is not necessarily a cosheaf, even when $X$ is a $T_0$-Alexandroff space.

\begin{example}
  Let $X=\{a,b,c\}$ be a topological space whose open sets are $\emptyset,\{a\},\{a,b\},\{a,c\},X$. Clearly, $X$ is a $T_0$-Alexandroff space. Define a precosheaf $F,G$ on $X$ by
  \[
    F(U)=
    \begin{cases}
      \Z&U\ne\emptyset\\
      0&U=\emptyset
    \end{cases}
    \quad\text{and}\quad G(U)=
    \begin{cases}
      \Z&U=\{a\}\\
      0&U\ne\{a\},
    \end{cases}
  \]
  where the maps $F(U<V)$ are the identity map of $\Z$ for $U\ne\emptyset$. We can easily verify that $F,G$ are cosheaves. There is a unique surjective homomorphism $\alpha\colon F\to G$, and we have
  \[
    (\Ker\alpha)(U)=
    \begin{cases}
      \Z&U\ne\emptyset,\{a\}\\
      0&U=\emptyset,\{a\}.
    \end{cases}
  \]
  Then it follows that
  \[
    \underset{\N(\mathcal{U}_X)}{\colim}\Ker\alpha=\Z^2.
  \]
  On the other hand, $(\Ker\alpha)(X)=\Z$, and thus $\Ker\alpha$ is not a cosheaf.
\end{example}

\begin{remark}
  The precosheaf defined by using the cokernel is a cosheaf because colimits commute with colimits.
\end{remark}

We construct a projective resolution for a cosheaf on a $T_0$-Alexandroff space by dualizing the standard construction of an injective resolution for a sheaf. We say that a sequence of cosheaves
\[
  F\xrightarrow{\alpha}G\xrightarrow{\beta}H
\]
on a topological space $X$ is exact if $F_x\xrightarrow{\alpha_x}G_x\xrightarrow{\beta_x}H_x$ is an exact sequence of abelian groups for each $x\in X$. A resolution of a cosheaf $F$ is an exact sequence of cosheaves
\[
  \cdots\to R_n\to R_{n-1}\to\cdots\to R_0\to F\to 0
\]
which we denote by $R_\bullet\to F\to 0$, and a homotopy between resolutions for $F$ can be defined in the obvious way. We say that a homomorphism of cosheaves $\alpha\colon F\to G$ is surjective if $F\xrightarrow{\alpha}G\to 0$ is exact, and injective if $0\to F\xrightarrow{\alpha}G$ is exact. A cosheaf $P$ on a topological space $X$ is \emph{projective} if for any homomorphism $\alpha\colon P\to F$ and any surjective homomorphism $\beta\colon G\to F$, there is a commutative diagram
\[
  \xymatrix{
    &G\ar[d]^\beta\\
    P\ar[r]_\alpha\ar@/^15pt/[ru]&F.
  }
\]
A projective resolution for a cosheaf $F$ is a resolution $P_\bullet\to F\to 0$ such that each $P_n$ is projective.

We consider the formal dual of a skyscraper sheaf. Let $X$ be a topologiacal space. For $x\in X$ and an abelian group $A$, we define a precosheaf $\sky(x,A)$ by
\[
  \sky(x,A)(U)=
  \begin{cases}
    A&x\in U\\
    0&x\not\in U
  \end{cases}
\]
for an open set $U$ of $X$. Clearly, $\sky(x,A)$ is a cosheaf.

\begin{lemma}
  \label{skyscraper}
  Let $X$ be a topological space. For $x\in X$ and a projective abelian group $P$, $\sky(x,P)$ is a projective cosheaf.
\end{lemma}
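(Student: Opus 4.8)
The plan is to deduce the required lifting property for $\sky(x,P)$ from the projectivity of the abelian group $P$, via an isomorphism
\[
  \operatorname{Hom}_{\mathbf{Cosh}(X)}(\sky(x,P),F)\;\cong\;\operatorname{Hom}_{\mathbf{Ab}}(P,F_x),
\]
natural in the cosheaf $F$, under which a homomorphism $\gamma$ corresponds to the induced map $\gamma_x$ on costalks. To set this up I would unwind the definition of a homomorphism $\gamma\colon\sky(x,P)\to F$: for an open set $U$ with $x\notin U$ one necessarily has $\gamma(U)=0$, while for $U$ with $x\in U$ the component $\gamma(U)\colon P\to F(U)$ is arbitrary, subject only to the naturality constraint that for inclusions $U\subset V$ of open sets containing $x$ the map $F(U)\to F(V)$ carries $\gamma(U)$ to $\gamma(V)$. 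Thus the datum of $\gamma$ is exactly a cone from $P$ to the diagram $F|_{\N(x)}$, and by the universal property of the limit $F_x=\lim_{\N(x)}F$ such cones are in natural bijection with group homomorphisms $P\to F_x$. Since $\sky(x,P)$ is constantly $P$ with identity transition maps on $\N(x)$, one has $\sky(x,P)_x=P$, and the bijection just described is precisely $\gamma\mapsto\gamma_x$; naturality in $F$ is immediate from functoriality of the costalk.

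Granting this, I would finish as follows. Let $\alpha\colon\sky(x,P)\to F$ be a homomorphism and $\beta\colon G\to F$ a surjective homomorphism of cosheaves on $X$. By definition of surjectivity (exactness of $G\xrightarrow{\beta}F\to 0$ read at the costalk over $x$), the map $\beta_x\colon G_x\to F_x$ is surjective, so projectivity of the abelian group $P$ provides a homomorphism $\phi\colon P\to G_x$ with $\beta_x\circ\phi=\alpha_x$. Transporting $\phi$ back through the bijection yields a homomorphism $\tilde\alpha\colon\sky(x,P)\to G$ with $\tilde\alpha_x=\phi$; concretely, $\tilde\alpha(U)$ is the composite of $\phi$ with the projection $G_x\to G(U)$ when $x\in U$, and is $0$ otherwise. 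Then $(\beta\circ\tilde\alpha)_x=\beta_x\circ\phi=\alpha_x$, and since $\gamma\mapsto\gamma_x$ is injective this forces $\beta\circ\tilde\alpha=\alpha$. Hence $\sky(x,P)$ is projective.

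The one point requiring genuine care is the first step: verifying that a homomorphism out of $\sky(x,P)$ is the same thing as, and freely determined by, a single group map $P\to F_x$, and that this identification is compatible with passage to costalks. Everything after that is a formal consequence of the universal property of limits together with the defining property of a projective abelian group, so I anticipate no real obstacle. Note that the argument nowhere uses that $X$ is Alexandroff, consistent with the stated generality.
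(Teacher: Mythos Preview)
Your argument is correct and follows essentially the same route as the paper: lift $\alpha_x$ through the surjection $\beta_x$ using projectivity of $P$, then define $\tilde{\alpha}(U)$ as the composite of that lift with the canonical map $G_x\to G(U)$ for $U\ni x$ and as $0$ otherwise. Your additional packaging via the natural bijection $\operatorname{Hom}(\sky(x,P),F)\cong\operatorname{Hom}(P,F_x)$, $\gamma\mapsto\gamma_x$, is a clean way to see why the construction works and why $\beta\circ\tilde{\alpha}=\alpha$ (injectivity of $\gamma\mapsto\gamma_x$), whereas the paper simply asserts the resulting commutative diagram; but the underlying content is the same.
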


\begin{proof}
  Let $F,G$ be cosheaves on $X$. Suppose that we are given a homomorphism $\alpha\colon\sky(x,P)\to F$ and a surjective homomorphism $\beta\colon G\to F$. Then since $P$ is projective and $\sky(x,P)_x\cong P$, there is a commutative diagram
  \[
    \xymatrix{
      &G_x\ar[d]^{\beta_x}\\
      \sky(x,P)_x\ar[r]_(.65){\alpha_x}\ar@/^15pt/[ru]^{\tilde{\alpha}_x}&F_x.
    }
  \]
  Let $U$ be an open set of $X$. For $x\in U$, let $\tilde{\alpha}(U)$ denote the composite
  \[
    \sky(x,P)(U)\cong\sky(x,P)_x\xrightarrow{\tilde{\alpha}_x}G_x\xrightarrow{\rm proj}G(U),
  \]
  and for $x\not\in U$, let $\tilde{\alpha}(U)=0\colon\sky(x,P)(U)=0\to G(U)$. Then we get a homomorphism $\tilde{\alpha}\colon\sky(x,P)\to G$ satisfying a commutative diagram
  \[
    \xymatrix{
      &G\ar[d]^{\beta}\\
      \sky(x,P)\ar[r]_(.65){\alpha}\ar@/^15pt/[ru]^{\tilde{\alpha}}&F.
    }
  \]
  Thus $\sky(x,P)$ is a projective cosheaf, as stated.
\end{proof}

\begin{proposition}
  [cf. {\cite[Chapter II, Theorem 3.2]{B1}}]
  \label{resolution}
  Every cosheaf on a $T_0$-Alexandroff space admits a projective resolution which is unique, up to homotopy.
\end{proposition}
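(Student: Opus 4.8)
The plan is to dualize the construction of the canonical (Godement) resolution of a sheaf. Fix a cosheaf $F$ on a $T_0$-Alexandroff space $X$. For each $x\in X$ choose a free abelian group $L_x$ together with a surjection $L_x\to F_x=F(U_x)$, and set
\[
  P_0=\bigoplus_{x\in X}\sky(x,L_x).
\]
A coproduct of cosheaves is a cosheaf because colimits commute with colimits, and a coproduct of projective cosheaves is again projective, so $P_0$ is a projective cosheaf by Lemma \ref{skyscraper}. Exactly as in the proof of Lemma \ref{skyscraper}, the maps $L_x\to F_x=F(U_x)\to F(U)$ for $x\in U$ assemble into a homomorphism $\epsilon\colon P_0\to F$; on the costalk at $y$ the summand indexed by $x=y$ contributes the surjection $L_y\to F_y$, so $\epsilon$ is surjective.

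The first point at which naive homological algebra breaks down is the iteration step: one would like to continue with $\Ker\epsilon$, but this precosheaf need not be a cosheaf. The remedy is to replace it by its cosheafification, which exists by Theorem \ref{cosheafification}: put $Z=(\Ker\epsilon)^+$. Since $(\Ker\epsilon)(U_x)=\Ker(\epsilon_x)$, the explicit description of the cosheafification in the proof of Theorem \ref{cosheafification} together with $F_y\cong F(U_y)$ shows $Z_y\cong\Ker(\epsilon_y)$, and the canonical map $Z\to\Ker\epsilon\to P_0$ is, on costalks, the inclusion $\Ker(\epsilon_y)\hookrightarrow (P_0)_y$. Hence $0\to Z\to P_0\xrightarrow{\epsilon}F\to 0$ is exact in the sense of the paper. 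Running the same construction with $Z$ in place of $F$ and iterating yields a complex $\cdots\to P_1\to P_0\xrightarrow{\epsilon}F\to 0$ of projective cosheaves that is exact at every term, proving existence.

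For uniqueness up to homotopy I would run the usual comparison argument, again routing every appeal to kernels through the cosheafification. Given two projective resolutions $P_\bullet\to F$ and $Q_\bullet\to F$, one constructs a chain map $f_\bullet\colon P_\bullet\to Q_\bullet$ over $\mathrm{id}_F$ inductively: having $f_{<n}$, the composite $P_n\to P_{n-1}\xrightarrow{f_{n-1}}Q_{n-1}$ is annihilated by the next differential, hence factors through the precosheaf $\Ker(Q_{n-1}\to Q_{n-2})$ (with the convention $Q_{-1}=F$), hence through its cosheafification by the universal property of the latter; exactness of $Q_\bullet$ and the same costalk computation as above show that $Q_n$ surjects onto that cosheafification, so projectivity of $P_n$ supplies the lift $f_n$. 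The identical device produces the chain homotopies exhibiting the two composites $P_\bullet\to Q_\bullet\to P_\bullet$ and $Q_\bullet\to P_\bullet\to Q_\bullet$ as homotopic to the identity, and hence any two projective resolutions are chain homotopy equivalent. (Alternatively, one may transport the whole statement along the equivalence $[\P(X),\mathbf{Ab}]\simeq\mathbf{Cosh}(X)$ coming from Theorem \ref{correspondence}: it matches surjective homomorphisms with epimorphisms, hence projective cosheaves with projective functors and exact sequences of cosheaves with exact sequences in the abelian category $[\P(X),\mathbf{Ab}]$, where existence and uniqueness of projective resolutions are classical.) The recurring obstacle is exactly that $\mathbf{Cosh}(X)$ fails to be abelian — kernels are not cosheaves — and the single fact that rescues every step is that, over a $T_0$-Alexandroff space, cosheafification leaves costalks unchanged.
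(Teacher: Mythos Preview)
Your argument is correct and matches the paper's proof essentially step for step: both build $P_0=\bigoplus_{x\in X}\sky(x,L_x)$ with $L_x$ projective surjecting onto $F_x$, both iterate by replacing $\Ker\epsilon$ with its cosheafification (using that cosheafification preserves costalks), and both defer uniqueness to the standard homological-algebra lifting argument. Your parenthetical alternative via the equivalence $[\P(X),\mathbf{Ab}]\simeq\mathbf{Cosh}(X)$ of Theorem~\ref{correspondence} is a genuinely cleaner route the paper does not take: it exports the entire statement to the abelian functor category, where enough projectives and the comparison theorem are classical, and in particular it sidesteps the need to repeatedly cosheafify kernels.
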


\begin{proof}
  Let $F$ be a cosheaf on a $T_0$-Alexandroff space $X$. Let $\alpha\colon G\to F$ be a homomorphism of cosheaves. Then since $(\Ker\alpha)^+_x\cong(\Ker\alpha)_x$ for each $x\in X$, the composition of homomorphisms $(\Ker\alpha)^+\xrightarrow{\pi}\Ker\alpha\to G$ is injective. Thus it is sufficient to prove that there is a surjective homomorphism $P\to F$, where $P$ is a projective cosheaf.

  For each $x\in X$, there is a surjection $P_x\to F_x$, where $P_x$ is a projective abelian group. Then as in the proof of Lemma \ref{skyscraper}, we can define a homomorphism $\alpha(x)\colon\sky(x,P_x)\to F$ which is surjective on the costalks at $x$. Since the direct sum of projective cosheaves is a projective cosheaf, $\bigoplus_{x\in X}\sky(x,P_x)$ is a projective cosheaf by Lemma \ref{skyscraper}. Clearly, the homomorphism
  \[
    \bigoplus_{x\in X}\alpha(x)\colon\bigoplus_{x\in X}\sky(x,P_x)\to F
  \]
  is surjective. The uniqueness of a projective resolution, up to homotopy, can be proved by the standard argument in homological algebra, completing the proof.
\end{proof}

By Proposition \ref{resolution}, we are able to define the homology of a cosheaf on a $T_0$-Alexandroff space.

\begin{definition}
  Let $F$ be a cosheaf on a $T_0$-Alexandroff space $X$. The homology of $F$, denoted by $H_*(X;F)$, is defined by the homology of a chain complex
  \[
    \cdots\to P_n(X)\to P_{n-1}(X)\to\cdots\to P_0(X)\to 0,
  \]
  where $P_\bullet\to F\to 0$ is a projective resolution for $F$.
\end{definition}


\section{\v{C}ech homology and Borel-Moore homology}\label{Cech homology and Borel-Moore homology}

This section studies the \v{C}ech homology of a cosheaf on a $T_0$-Alexandroff space, and compare it with the Borel-Moore homology to prove Theorem \ref{main intro}.


\subsection{\v{C}ech homology}

Let $F$ be a precosheaf on a topological space $X$, and let $\mathcal{U}$ be an open cover of $X$. For $U_{\alpha_0},\ldots,U_{\alpha_n}\in\mathcal{U}$, let $U_{\alpha_0\cdots\alpha_n}=U_{\alpha_0}\cap\cdots\cap U_{\alpha_n}$. We define
\[
  \check{C}_n(\mathcal{U};F)=\bigoplus_{\alpha_0,\ldots,\alpha_n}F(U_{\alpha_0\cdots\alpha_n}),
\]
where the sum is taken over all non-empty $U_{\alpha_0\cdots\alpha_n}$ such that $\alpha_0,\ldots,\alpha_n$ are distinct. We define a map $\partial\colon\check{C}_n(\mathcal{U};F)\to\check{C}_{n-1}(\mathcal{U};F)$ by
\[
  \partial(x)=\sum_{i=0}^n(-1)^iF(U_{\alpha_0\cdots\alpha_n}<U_{\alpha_0\cdots\widehat{\alpha_i}\cdots\alpha_n})(x)
\]
for $x\in\check{C}_n(\mathcal{U};F)$. It is straightforward to check $\partial^2=0$. Then we get a chain complex $(\check{C}_*(\mathcal{U};F),\partial)$, and its homology is denoted by
\[
  \check{H}_*(\mathcal{U};F)
\]
and called the \emph{\v{C}ech homology} of $F$ with respect to $\mathcal{U}$. 



We compare \v{C}ech homology and cosheaf homology as well as \v{C}ech cohomology and sheaf cohomology. Let $\mathcal{U}$ be an open cover of a topological space $X$. For an open set $U$ of $X$, let
\[
  \mathcal{U}\vert_U=\{U\cap V\mid V\in\mathcal{U}\}
\]
be an open cover of $U$. For a precosheaf $F$, we can define a precosheaf $C_n(\mathcal{U};F)$ on $X$ by
\[
  C_n(\mathcal{U};F)(U)=\check{C}_n(\mathcal{U}\vert_U;F\vert_U)
\]
for an open set $U$ of $X$.

\begin{lemma}
  [cf. {\cite[Chapter III, Theorem 4.9]{B1}}]
  Let $F$ be a cosheaf on a $T_0$-Alexandroff space $X$, and let $\mathcal{U}$ be an open cover of $X$. Then the sequence
  \[
    C_\bullet(\mathcal{U};F)\to F\to 0
  \]
  is a resolution.
\end{lemma}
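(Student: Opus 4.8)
The statement asks for two things: that each $C_n(\mathcal{U};F)$ is genuinely a cosheaf (and not just a precosheaf), and that the augmented sequence $\cdots\to C_1(\mathcal{U};F)\to C_0(\mathcal{U};F)\to F\to0$ is exact on costalks at every point of $X$. The exactness is the substance of the lemma; I would prove it first and then record the (easier) verification that the $C_n(\mathcal{U};F)$ are cosheaves.

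For exactness, fix $x\in X$. Since $X$ is Alexandroff, \eqref{costalk} identifies the costalk at $x$ of each precosheaf in the complex with its value at $U_x$, and by naturality the induced maps on costalks are the values at $U_x$ of the natural boundary and augmentation maps; using $C_n(\mathcal{U};F)(U_x)=\check{C}_n(\mathcal{U}\vert_{U_x};F\vert_{U_x})$, the sequence of costalks at $x$ is therefore precisely the augmented \v{C}ech complex
\[
  \cdots\to\check{C}_1(\mathcal{U}\vert_{U_x};F\vert_{U_x})\to\check{C}_0(\mathcal{U}\vert_{U_x};F\vert_{U_x})\to F(U_x)\to0
\]
of the cover $\mathcal{U}\vert_{U_x}$ of $U_x$. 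So the whole problem reduces to showing this complex is exact, and the crucial observation is that $U_x$ is itself one of the members of $\mathcal{U}\vert_{U_x}$: since $\mathcal{U}$ covers $X$ there is $V\in\mathcal{U}$ with $x\in V$, hence $U_x\subset V$ by minimality of $U_x$, so that $U_x\cap V=U_x$; moreover every member of $\mathcal{U}\vert_{U_x}$ is contained in $U_x$.

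I am thus reduced to a standard fact: if an open cover $\mathcal{V}$ of an open set $W$ contains $W$ itself as a member, then for any precosheaf $G$ on $W$ the augmented \v{C}ech complex $\check{C}_\bullet(\mathcal{V};G)\to G(W)\to0$ is chain contractible, hence exact. I would prove this by the ``insert the index of $W$'' homotopy: totally order the index set so that the index $\gamma$ with $V_\gamma=W$ is least, let $h\colon\check{C}_n(\mathcal{V};G)\to\check{C}_{n+1}(\mathcal{V};G)$ send the summand of a tuple $(\alpha_0<\cdots<\alpha_n)$ identically to the summand of $(\gamma<\alpha_0<\cdots<\alpha_n)$ when $\alpha_0\neq\gamma$ and to $0$ when $\alpha_0=\gamma$, and let $h_{-1}\colon G(W)\to\check{C}_0(\mathcal{V};G)$ be the identity into the $\gamma$-th summand. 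Since $V_\gamma\cap V_{\alpha_0}\cap\cdots\cap V_{\alpha_n}=V_{\alpha_0}\cap\cdots\cap V_{\alpha_n}$, the map $h$ never changes the groups it acts on, and a direct computation gives $\partial h+h\partial=\mathrm{id}$ and $\epsilon h_{-1}=\mathrm{id}$. The part requiring care — and the main technical obstacle in the whole proof — is the sign bookkeeping in this identity on the tuples that already begin with $\gamma$, where $h$ vanishes and $h\partial$ must recover the identity from the single boundary term deleting $\gamma$, using once more that deleting $\gamma$ leaves the relevant intersection unchanged. (With the convention that allows repeated indices in the \v{C}ech complex the same formula works with no case distinction at all.) Note that the cosheaf hypothesis on $F$ is not used at this step.

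It remains to check that each $C_n(\mathcal{U};F)$ is a cosheaf. It is the direct sum, over the relevant index tuples, of the precosheaves $U\mapsto F(U\cap W)$ with $W$ the corresponding finite intersection of members of $\mathcal{U}$ (the empty ones contributing $0$, as $F(\emptyset)=0$); a direct sum of cosheaves is a cosheaf because colimits commute with colimits, so it suffices to see that $U\mapsto F(U\cap W)$ is a cosheaf for each fixed open $W$. By Lemma \ref{cosheaf costalk} together with the construction of the cosheafification in Theorem \ref{cosheafification}, a precosheaf $G$ on a $T_0$-Alexandroff space is a cosheaf exactly when the natural map $\rho(U)\colon\underset{x\in U}{\colim}\,G(U_x)\to G(U)$ of \eqref{rho} is an isomorphism for every open $U$; for $G(U)=F(U\cap W)$ this one verifies using that $z\in W$ forces $U_z\subset W$ (so the terms $F(U_z)$ with $z\in U\cap W$ already compute the colimit) together with the cosheaf property of $F$.
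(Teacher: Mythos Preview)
Your proof is correct and follows the same route as the paper: verify the cosheaf condition for $C_n(\mathcal{U};F)$ by commuting the colimit past the direct sum and invoking the cosheaf property of $F$, then check exactness on costalks by evaluating at $U_x$. The paper compresses the latter step to a single phrase (``by substituting $U_x$''), whereas you spell out explicitly that $U_x$ belongs to $\mathcal{U}\vert_{U_x}$ and give the standard contracting homotopy; this is exactly the content behind the paper's terse remark.
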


\begin{proof}
  For an open cover $\mathcal{V}$ of an open set $V$ of $X$, we have
  \begin{align*}
    \underset{\N(\mathcal{V})}{\colim}\,C_n(\mathcal{U};F)&=\underset{W\in\N(\mathcal{V})}{\colim}\bigoplus_{\alpha_0,\ldots,\alpha_n}F(U_{\alpha_0\cdots\alpha_n}\cap W)\\
    &\cong\bigoplus_{\alpha_0,\ldots,\alpha_n}\underset{W\in\N(\mathcal{V})}{\colim}F(U_{\alpha_0\cdots\alpha_n}\cap W)\\
    &\cong\bigoplus_{\alpha_0,\ldots,\alpha_n}F(U_{\alpha_0\cdots\alpha_n}\cap V)\\
    &=C_n(\mathcal{U};F)(V),
  \end{align*}
  implying $C_n(\mathcal{U};F)$ is a cosheaf. We can see that the sequence $C_\bullet(\mathcal{U};F)\to F\to 0$ is exact by substituting $U_x$, completing the proof.
\end{proof}



We dualize a flasque sheaf. A cosheaf $F$ on a topological space $X$ is called \emph{flasque} if $F(V<U)\colon F(V)\to F(U)$ is injective for all open sets $V\subset U$ of $X$. The following lemma follows at once from the definition of a flasque cosheaf.

\begin{lemma}
  [cf. {\cite[Chapter II, Theorem 5.5]{B1}}]
  \label{H(flasque)}
  If $F$ is a flasque cosheaf on a $T_0$-Alexandroff space $X$, then for $*\ge 1$,
  \[
    H_*(X;F)=0.
  \]
\end{lemma}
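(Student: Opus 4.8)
The plan is to dualize the classical proof that flasque sheaves are acyclic for sheaf cohomology, exploiting the very concrete description of cosheaves on $T_0$-Alexandroff spaces from Section \ref{Cosheaves on Alexandroff spaces}. The key observation is that, by Lemma \ref{cosheaf costalk}, every cosheaf $F$ on a $T_0$-Alexandroff space $X$ satisfies $F(U)\cong\colim_{x\in U}F(U_x)$, so a cosheaf is completely determined by its values on the basic open sets $U_x$ together with the structure maps among them; equivalently, by the cellular cosheaf $\iota^*(F)$ over $\P(X)$. Flasqueness says all these structure maps are injective, which is exactly a projectivity-type condition in disguise.

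First I would reduce the statement to showing that a flasque cosheaf $F$ fits into a short exact sequence of cosheaves $0\to K\to P\to F\to 0$ with $P$ projective and $K$ again flasque; given such a sequence, the long exact homology sequence (which exists once we have projective resolutions, by Proposition \ref{resolution} and the usual horseshoe/comparison arguments) together with $H_*(X;P)=0$ for $*\ge1$ yields $H_*(X;F)\cong H_{*-1}(X;K)$ for $*\ge2$ and a surjection $H_1(X;F)\twoheadleftarrow$ something splitting off, and then dimension-shifting finishes the induction provided the base case $H_1(X;F)=0$ holds. So the heart of the matter is two things: (i) a flasque cosheaf admits a surjection from a projective cosheaf whose kernel is flasque, and (ii) $H_1(X;F)=0$ for flasque $F$, i.e. flasque cosheaves are ``projective enough'' to have no $H_1$.

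For (i), the natural candidate for the projective cover is $P=\bigoplus_{x\in X}\sky(x,F(U_x))$ — but this only works verbatim if each $F(U_x)$ is a projective abelian group, which need not hold. The fix is to instead take $P=\bigoplus_{x\in X}\sky(x,P_x)$ where $P_x\twoheadrightarrow F(U_x)$ is a projective cover in $\mathbf{Ab}$, as in the proof of Proposition \ref{resolution}; the induced map $P\to F$ is surjective, and one must check the kernel is flasque. This is where flasqueness of $F$ is used crucially: the structure map $F(U_x)\to F(U_y)$ for $x\le y$ being injective should force the kernel precosheaf's structure maps to be injective as well, after passing to costalks and using Lemma \ref{cosheaf costalk}. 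Alternatively, and perhaps more cleanly, one can argue directly in the cellular-cosheaf picture: under the equivalence of Theorem \ref{correspondence}, flasque cosheaves correspond to functors $P\to\mathbf{Ab}$ all of whose structure maps are injective, and one checks that such a functor, when all groups involved are free (which one arranges by the projective-cover step), gives a cosheaf whose \v{C}ech complex with respect to $\mathcal{U}_X=\{U_x\}_{x\in X}$ is exact in positive degrees.

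I expect the main obstacle to be establishing that flasqueness is inherited by the kernel $K$ in (i), and relatedly, pinning down the correct induction: one wants an honest long exact sequence in cosheaf homology, which requires knowing that $0\to K\to P\to F\to 0$ being exact (in the costalk sense defined in Section \ref{Defining cosheaf homology}) induces a long exact sequence — this follows from the standard snake-lemma argument applied to a horseshoe-lifted pair of projective resolutions, but one should make sure the costalk-exactness hypothesis is enough, which it is because $P_x(X)$, $H_*$, etc. are all computed costalk-wise via Lemma \ref{cosheaf costalk}. A secondary subtlety is the base case $*=1$: here I would compute $H_1(X;F)$ directly from the resolution $C_\bullet(\mathcal{U}_X;F)\to F\to0$ of the previous lemma combined with a comparison to a projective resolution, or observe that for flasque $F$ the \v{C}ech complex $\check{C}_*(\mathcal{U}_X;F)$ itself is acyclic in positive degrees because injectivity of all restriction maps makes the augmented complex contractible via an explicit chain homotopy. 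Once $H_1=0$ and the dimension-shifting sequence $H_*(X;F)\cong H_{*-1}(X;K)$ are in hand, induction on $*$ completes the proof.
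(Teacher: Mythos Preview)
The paper gives no proof of this lemma: it simply asserts that the statement ``follows at once from the definition of a flasque cosheaf,'' pointing to the sheaf-theoretic analogue in Bredon. So there is nothing in the paper to compare against; any argument you supply is already more than the paper provides.

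Your dimension-shifting outline is the natural dualization one would expect, and the reduction to (i) and (ii) is correct. But the step you yourself flag as ``the main obstacle'' is a genuine gap that your sketch does not close. You propose to deduce flasqueness of the kernel $K$ from injectivity of the costalk maps $K_x\to K_y$; this does not work. For a cosheaf $G$ on a $T_0$-Alexandroff space one has $G(V)=\colim_{x\in V}G(U_x)$ by Lemma~\ref{cosheaf costalk}, and injectivity of all the maps $G(U_x)\to G(U_y)$ does \emph{not} force $G(V)\to G(U)$ to be injective for general open $V\subset U$, since colimits in $\mathbf{Ab}$ over non-filtered diagrams need not preserve monomorphisms. (Concretely: take the poset $\{a,b,c,d\}$ with $a,b<c,d$ and the functor sending everything to $\Z$ with identity maps; all basic-open structure maps of $\widehat{G}$ are isomorphisms, yet $\widehat{G}(\{c,d\})\to\widehat{G}(X)$ is $\Z^2\to\Z$.) So ``flasque on basic opens'' is strictly weaker than flasque, and your induction does not restart as written.

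The honest dual of Bredon's argument is not to show $K$ flasque directly, but to use flasqueness of the \emph{quotient} $F$ to prove that $0\to K(U)\to P(U)\to F(U)\to 0$ is exact for every open $U$; this is the analogue of ``a short exact sequence of sheaves with flasque left term stays exact on sections.'' From that, $H_1(X;F)=0$ follows from the long exact sequence, and the same section-wise exactness on all opens then yields flasqueness of $K$, so the shift proceeds. Your sketch gestures at this (``flasqueness of $F$ is used crucially'') but never establishes it. Your alternative via the \v{C}ech complex of $\mathcal{U}_X$ is likewise incomplete: $C_\bullet(\mathcal{U}_X;F)$ is not a projective resolution, and passing from \v{C}ech acyclicity to $H_*(X;F)=0$ would require a comparison theorem that in this paper (Theorem~\ref{comparison}) already invokes the very lemma you are proving.
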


Now we compare \v{C}ech homology and cosheaf homology.

\begin{theorem}
  [cf. {\cite[Chapter III, Theorem 4.13]{B1}}]
  \label{comparison}
  Let $F$ be a cosheaf on a $T_0$-Alexandroff space $X$, and let $\mathcal{U}=\{U_\alpha\}_{\alpha\in A}$ be an open cover of $X$. If
  \[
    H_*(U_{\alpha_0\cdots\alpha_n};F\vert_{U_{\alpha_0\cdots\alpha_n}})=0
  \]
  for $*\ge 1$ and $n\ge 0$, then there is an isomorphism
  \[
    \check{H}_*(\mathcal{U};F)\cong H_*(X;F).
  \]
\end{theorem}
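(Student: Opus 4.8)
The plan is to run the standard double-complex argument from sheaf theory, dualized to cosheaves, using the two resolutions already in hand. Starting from a projective resolution $P_\bullet \to F \to 0$ (Proposition \ref{resolution}) and the \v{C}ech resolution $C_\bullet(\mathcal{U};F) \to F \to 0$, I would form the double complex $C_{p,q} = C_p(\mathcal{U};P_q)(X)$, i.e.\ apply the \v{C}ech construction with coefficients in each $P_q$ and evaluate on $X$. Here $C_p(\mathcal{U};P_q)(X) = \bigoplus_{\alpha_0,\ldots,\alpha_p} P_q(U_{\alpha_0\cdots\alpha_p})$, with one differential the \v{C}ech $\partial$ and the other induced by the resolution differential of $P_\bullet$. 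The idea is to compute the homology of the associated total complex in two ways using the two spectral sequences (or, since everything is a first-quadrant bounded-below situation over $\mathbf{Ab}$, simply iterated homology).

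First I would compute the homology by taking $\partial^{\mathrm{\check{C}ech}}$-homology first. For each fixed $q$, the complex $C_\bullet(\mathcal{U};P_q)(X)$ is the \v{C}ech complex $\check{C}_*(\mathcal{U};P_q)$, so its homology is $\check{H}_*(\mathcal{U};P_q)$. Here I would use that $P_q$ is a direct sum of skyscraper cosheaves $\sky(x,P^{(x)}_q)$, and that for a skyscraper cosheaf the \v{C}ech complex $\check{C}_*(\mathcal{U};\sky(x,A))$ is (up to the usual reindexing) the simplicial chain complex of the nerve of the subcover $\{U_\alpha : x \in U_\alpha\}$, which is a simplex cone and hence acyclic with $\check{H}_0 = A = \sky(x,A)(X)$. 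Since \v{C}ech chains commute with direct sums, $\check{H}_0(\mathcal{U};P_q) = P_q(X)$ and $\check{H}_*(\mathcal{U};P_q) = 0$ for $*\ge 1$. Therefore the $E_1$ (or first iterated homology) collapses to the column $\check{H}_0(\mathcal{U};P_\bullet) = P_\bullet(X)$, and the total homology is $H_*(P_\bullet(X)) = H_*(X;F)$ by definition of cosheaf homology.

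Second I would compute the homology by taking $P_\bullet$-homology first. For fixed $p$, the column is $\bigoplus_{\alpha_0,\ldots,\alpha_p} P_\bullet(U_{\alpha_0\cdots\alpha_p})$. Restricting a projective resolution of $F$ to an open set $U_{\alpha_0\cdots\alpha_p}$ gives a projective resolution of $F\vert_{U_{\alpha_0\cdots\alpha_p}}$ there (restriction of a skyscraper cosheaf to an open subset is again a skyscraper cosheaf or zero, hence projective by Lemma \ref{skyscraper}, and restriction is exact on costalks), so the $P_\bullet$-homology of that column computes $H_*(U_{\alpha_0\cdots\alpha_p};F\vert_{U_{\alpha_0\cdots\alpha_p}})$. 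By hypothesis this vanishes for $*\ge 1$ and equals $F(U_{\alpha_0\cdots\alpha_p})$ for $*=0$. So the first iterated homology collapses to the row $\check{C}_\bullet(\mathcal{U};F)$, and taking the remaining \v{C}ech differential yields $\check{H}_*(\mathcal{U};F)$. Comparing the two computations gives the desired isomorphism $\check{H}_*(\mathcal{U};F)\cong H_*(X;F)$.

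The main obstacle I anticipate is the bookkeeping that makes the spectral-sequence collapse legitimate: I need to know the total complex is the right one (convergence of both spectral sequences, which holds since the double complex is concentrated in the first quadrant and everything is bounded below in the $P$-direction and—after discarding empty intersections—also in the \v{C}ech direction for a fixed open cover), and I need the two lemmas \textquotedblleft restriction of a projective cosheaf is projective\textquotedblright{} and \textquotedblleft the \v{C}ech complex of a skyscraper cosheaf is acyclic\textquotedblright, both of which are short but must be stated carefully. A secondary subtlety is that the projective resolution produced in Proposition \ref{resolution} is a sum over \emph{all} $x\in X$, so the skyscraper-acyclicity computation must be done summand-by-summand and then assembled; since \v{C}ech chains and homology commute with direct sums this is harmless, but it is the point where I would be most careful. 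Once these inputs are in place, the rest is the classical argument and I would keep the write-up terse.
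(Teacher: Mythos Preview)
Your proposal is correct and follows the same double-complex strategy as the paper's proof: form $\check{C}_p(\mathcal{U};P_q)$, augment by the row $P_\bullet(X)$ and the column $\check{C}_\bullet(\mathcal{U};F)$, and compare the two filtrations. The only minor difference is in justifying \v{C}ech-acyclicity of the projectives $P_q$---you compute it directly from their skyscraper decomposition via the contractibility of the nerve of $\{U_\alpha:x\in U_\alpha\}$, whereas the paper observes that each $P_q$ (and hence each $C_*(\mathcal{U};P_q)$) is flasque and invokes Lemma~\ref{H(flasque)}.
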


\begin{proof}
  Let $P_\bullet\to F\to 0$ be the projective resolution constructed in the proof of Proposition \ref{resolution}. Clearly, the direct sum of flasque cosheaves is flasque. Then since the cosheaf $\sky(x,A)$ is flasque, each $P_n$ is flasque. Thus $C_*(\mathcal{U};P_n)$ is flasque too. Consider a commutative diagram
  \[
    \xymatrix{
      &\vdots\ar[d]&\vdots\ar[d]&\vdots\ar[d]\\
      \cdots\ar[r]&\check{C}_1(\mathcal{U};P_1)\ar[r]\ar[d]&\check{C}_1(\mathcal{U};P_0)\ar[r]\ar[d]&\check{C}_1(\mathcal{U};F)\ar[r]\ar[d]&0\\
      \cdots\ar[r]&\check{C}_0(\mathcal{U};P_1)\ar[r]\ar[d]&\check{C}_0(\mathcal{U};P_0)\ar[r]\ar[d]&\check{C}_0(\mathcal{U};F)\ar[r]\ar[d]&0\\
      \cdots\ar[r]&P_1(X)\ar[r]\ar[d]&P_0(X)\ar[r]\ar[d]&F(X)\ar[r]\ar[d]&0\\
      &0&0&0.
    }
  \]
  By Lemma \ref{H(flasque)} and the assumption $H_*(U_{\alpha_0\cdots\alpha_n};F\vert_{U_{\alpha_0\cdots\alpha_n}})=0$ for $*\ge 1$ and $n\ge 0$, all sequences but the bottom and the right ones are exact. Then we obtain the isomorphism in the statement.
\end{proof}


\subsection{Borel-Moore homology}

Let $X$ be a CW complex, and let $P(X)$ denote the face poset of $X$. We abbreviate a cellular cosheaf over $P(X)^{\mathrm{op}}$ by a cellular cosheaf on $X$. Let $F$ be a cellular cosheaf on $X$. We set
\[
  C_n^\BM(X;F)=\bigoplus_{\substack{\sigma\in P(X)\\\dim\sigma=n}}F(\sigma)
\]
and define a map $\partial\colon C_n^\BM(X;F)\to C_{n-1}^\BM(X;F)$ by
\[
  \partial(x)=\sum_{\dim\tau=n-1}[\sigma:\tau]F(\sigma>\tau)(x)
\]
for $x\in F(\sigma)$ and $\sigma\in P_n(X)$ with $\dim\sigma=n$, where $[\sigma:\tau]$ denotes the coincidence number. Then it is straightforward to check $\partial^2=0$, so that we get a chain complex $(C_*^\BM(X;F),\partial)$. The \emph{Borel-Moore homology}
\[
  H^\BM_*(X;F)
\]
is defined to be the homology of the above chain complex. Now we prove:

\begin{theorem}
  \label{main}
  Let $F$ be a cellular cosheaf on a simplicial complex $K$. Then there is an isomorphism
  \[
    H^\BM_*(K;F)\cong H_*(\X(P(K));\widehat{F}).
  \]
\end{theorem}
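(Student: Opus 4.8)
The plan is to identify the \v{C}ech complex of $\widehat{F}$ with respect to a well-chosen open cover of $\X(P(K))$ with the Borel--Moore chain complex of $F$, and then invoke Theorem \ref{comparison}. The natural cover to use is $\mathcal{U}=\{U_\sigma\}_{\sigma\in P(K)}$ where $U_\sigma = \X(P(K))_{\ge \sigma}$ is the minimal open set around the point $\sigma$; since $K$ is a simplicial complex, $U_\sigma$ consists of all faces containing $\sigma$, i.e. all simplices having $\sigma$ as a face. A finite intersection $U_{\sigma_0}\cap\cdots\cap U_{\sigma_n}$ is then the set of faces containing every $\sigma_i$, which is nonempty precisely when $\sigma_0,\dots,\sigma_n$ span a common face $\tau=\sigma_0\cup\cdots\cup\sigma_n$ of $K$, and in that case the intersection equals $U_\tau$ and has $\tau$ as its minimal element. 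This is exactly where simpliciality is used: in a general CW (or even regular CW) complex, a set of faces need not have a least common coface, so the nerve is more complicated.

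First I would record that for this cover $\widehat{F}(U_\sigma)=\operatorname{colim}_{x\in U_\sigma}F(x)\cong F(\sigma)$ because $\sigma$ is the terminal object of $U_\sigma$ (viewed as a subposet of $P(K)^{\mathrm{op}}$, $\sigma$ is minimal, hence terminal for the colimit), and similarly $\widehat{F}(U_{\sigma_0\cdots\sigma_n})\cong F(\sigma_0\cup\cdots\cup\sigma_n)$ when this union is a face. Next I would verify the hypothesis of Theorem \ref{comparison}: each $U_{\sigma_0\cdots\sigma_n}$ is of the form $U_\tau=\X(P(K_{\ge\tau}))$, which is the open subspace corresponding to the poset $P(K)_{\ge\tau}$; this poset has a minimum $\tau$, so $\widehat{F}\vert_{U_\tau}$ is isomorphic to $\widehat{G}$ for the cellular cosheaf $G$ on that poset, and a poset with a minimum is "contractible" in the relevant sense — concretely, $\sky(\tau,A)$-type arguments or the fact that $\widehat{F}\vert_{U_\tau}$ is flasque (restriction maps are the structure maps $F(\tau)\to F(\tau')$ composed with identities, hence... ) — more robustly, one shows directly that the chain complex computing $H_*(U_\tau;\widehat F\vert_{U_\tau})$ is acyclic in positive degrees because $U_\tau$ admits $\tau$ as a focal point, giving a contracting homotopy on $C_\bullet(\mathcal{U}\vert_{U_\tau};\cdot)$. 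So Theorem \ref{comparison} applies and $H_*(\X(P(K));\widehat{F})\cong\check{H}_*(\mathcal{U};\widehat{F})$.

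It then remains to identify $\check{C}_*(\mathcal{U};\widehat{F})$ with $C_*^{\BM}(K;F)$ as chain complexes, up to chain homotopy equivalence. The \v{C}ech group $\check{C}_n(\mathcal{U};\widehat{F})=\bigoplus F(\sigma_0\cup\cdots\cup\sigma_n)$ ranges over $(n+1)$-tuples of distinct faces spanning a common coface, whereas $C_n^{\BM}(K;F)=\bigoplus_{\dim\tau=n}F(\tau)$. These are not literally equal, but the \v{C}ech complex for this cover is the complex associated to the barycentric subdivision / order complex of $K$, and the point is that both compute the same thing. The cleanest route is: restrict attention to the subcomplex of $\check{C}_*(\mathcal{U};\widehat{F})$ spanned by strictly increasing chains $\sigma_0<\sigma_1<\cdots<\sigma_n$ in $P(K)$, where $\sigma_0\cup\cdots\cup\sigma_n=\sigma_n$, so the summand is $F(\sigma_n)$; this subcomplex is the cellular cochain-type complex of the order complex $K'$ with coefficients in the pulled-back cosheaf, and a standard acyclic-carrier or discrete-Morse argument shows the inclusion is a chain homotopy equivalence (the "alternating sum" / Čech-to-Alexander collapse). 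Finally one identifies the homology of that subdivided complex with $H_*^{\BM}(K;F)$ via Lemma \ref{subdivision} (invariance of Borel--Moore cellular cosheaf homology under barycentric subdivision), which is stated in the excerpt as available.

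The main obstacle I expect is the last identification — reconciling the \v{C}ech complex of the minimal-open cover (which is naturally a subdivided, "order-complex" flavored complex) with the unsubdivided Borel--Moore complex $C_*^{\BM}(K;F)$, getting the signs/coincidence numbers $[\sigma:\tau]$ to match the \v{C}ech differential, and supplying the chain homotopy equivalence collapsing the extra (non-increasing-chain) summands. Everything else — the computation of costalks, the verification of the vanishing hypothesis, and the appeal to Theorems \ref{comparison} and \ref{poset-space} — is routine given the machinery already set up. If Lemma \ref{subdivision} is formulated strongly enough (as an isomorphism on Borel--Moore homology, not merely on the level of the order complex), the argument closes cleanly; otherwise one would need to prove that subdivision invariance directly, which would be the real work.
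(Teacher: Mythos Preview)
Your overall architecture is exactly right --- identify the \v{C}ech complex of $\widehat F$ for a suitable cover of $\X(P(K))$ with the Borel--Moore complex, then apply Theorem~\ref{comparison} --- but your choice of cover makes the identification harder than it needs to be. The paper takes the cover $\mathcal U=\{U_v\}_{v\in V}$ indexed by the \emph{vertices} of $K$ only, not by all faces. For distinct vertices $v_0,\dots,v_n$, the intersection $U_{v_0}\cap\cdots\cap U_{v_n}$ is nonempty precisely when $v_0,\dots,v_n$ span a simplex $[v_0,\dots,v_n]$ of $K$, and in that case equals $U_{[v_0,\dots,v_n]}$. Hence the nerve of $\mathcal U$ is $K$ itself, and
\[
  \check C_n(\mathcal U;\widehat F)=\bigoplus_{\dim\sigma=n}\widehat F(U_\sigma)\cong\bigoplus_{\dim\sigma=n}F(\sigma)=C_n^{\BM}(K;F)
\]
on the nose, with the \v{C}ech differential matching the simplicial incidence numbers directly. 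The vanishing hypothesis of Theorem~\ref{comparison} is then checked exactly as you describe: each nonempty intersection is some $U_\tau$, and by~\eqref{costalk} the global sections of a projective resolution over $U_\tau$ coincide with the costalks at $\tau$, which form an exact sequence by definition. No subdivision, no collapse, no appeal to Lemma~\ref{subdivision}.

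By contrast, your cover $\{U_\sigma\}_{\sigma\in P(K)}$ has a nerve that strictly contains the order complex $\Delta(P(K))$ (e.g.\ two incomparable vertices of a common edge already give a $1$-simplex of the nerve that is not a chain), so the resulting \v{C}ech complex is neither $C_*^{\BM}(K;F)$ nor $C_*^{\BM}(\Delta(P(K));\Delta(F))$. Reducing it to the chains subcomplex and then invoking Lemma~\ref{subdivision} can presumably be made to work, but that collapse is genuine extra labor --- precisely the ``main obstacle'' you flag --- and it is entirely sidestepped by the paper's cover. The moral: when $K$ is simplicial, the minimal opens around vertices already have nerve $K$, which turns the comparison into a single line.
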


\begin{proof}
  Let $V$ denote the vertex set of $K$, and let $\mathcal{U}=\{U_v\}_{v\in V}$ be an open cover of $\X(P(K))$. If $U_{v_0}\cap\cdots\cap U_{v_n}\ne\emptyset$ for distinct $v_0,\ldots,v_n$, then vertices $v_0,\ldots,v_n$ form a simplex $[v_0,\ldots,v_n]$ of $K$ such that $U_{v_0}\cap\cdots\cap U_{v_n}=U_{[v_0,\ldots,v_n]}$. Thus $(\check{C}_*(\mathcal{U};\widehat{F}),\partial)\cong(C^\BM_*(K;F),\partial)$, implying
  \begin{equation}
    \label{Cech-BM}
    \check{H}_*(\mathcal{U};\widehat{F})\cong H^\BM_*(K;F).
  \end{equation}
  Let $P_\bullet\to\widehat{F}\to 0$ be a projective resolution for $\widehat{F}$. Then by \eqref{costalk}, the chain complex $\cdots\to P_n(U_{[v_0,\ldots,v_n]})\to P_{n-1}(U_{[v_0,\ldots,v_n]})\to\cdots$ is identified with a sequence of costalks $\cdots\to(P_n)_{[v_0,\ldots,v_n]}\to(P_{n-1})_{[v_0,\ldots,v_n]}\to\cdots$ which is exact by definition. Then we get 
  \[
    H_*(U_{v_0}\cap\cdots\cap U_{v_n};\widehat{F}\vert_{U_{v_0}\cap\cdots\cap U_{v_n}})=H_*(U_{[v_0,\ldots,v_n]};\widehat{F}\vert_{U_{[v_0,\ldots,v_n]}})=0
  \]
  for $*\ge 1$. Thus by Theorem \ref{comparison}, there is an isomorphism
  \begin{equation}
    \label{Cech-homology}
    \check{H}_*(\mathcal{U};\widehat{F})\cong H_*(\X(P(K));\widehat{F}).
  \end{equation}
  Therefore by combining \eqref{Cech-BM} and \eqref{Cech-homology}, the proof is finished.
\end{proof}

We discuss the Borel-Moore homology of a general cellular cosheaf. Let $P$ be a poset. Recall that the order complex $\Delta(P)$ is the geometric realization of an abstract simplicial complex whose simplices are finite sequences $x_0<\cdots<x_n$ in $P$. It is well known that $X\cong\Delta(P(X))$ whenever $X$ is a regular CW complex, and a relation between functors $\Delta$ and $\X$ can be found in \cite[Section 1]{Bar}. Let $F$ be a cellular cosheaf over a poset $P$. Then we can define a cellular cosheaf $\Delta(F)$ on $\Delta(P)$ by
\[
  \Delta(F)(x_0<\cdots<x_n)=F(x_n).
\]
Then we can define the Borel-Moore homology of $F$ by $H^\BM_*(\Delta(P);\Delta(F))$, which is also known as the higher colimits of $F$ (see \cite{BK}). Thus by Theorem \ref{main}, the Borel-Moore homology of $F$ is also identified with the homology of the naturally associated cosheaf. The above definition of the Borel-Moore homology for a functor over a general poset is consistent with the case of a functor over the face poset of a regular CW complex by the following dual version of \cite[Theorem 11.27]{C}.

\begin{lemma}
  \label{subdivision}
  Let $F$ be a cellular cosheaf on a regular CW complex $X$. Then there is an isomorphism
  \[
    H_*^\BM(X;F)\cong H_*^\BM(\Delta(P(X));\Delta(F)).
  \]
\end{lemma}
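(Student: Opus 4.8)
Write $P=P(X)$ and regard the cellular cosheaf $F$ as a functor $P^{\mathrm{op}}\to\mathbf{Ab}$. The plan is to realise each side, as a functor of $F$, as the left derived functor of $\colim^{P^{\mathrm{op}}}$, i.e.\ as the higher colimits $\colim_*^{P^{\mathrm{op}}}F$ (cf.\ \cite{BK}), and then to invoke the uniqueness of universal $\delta$-functors. Both $F\mapsto C_\bullet^\BM(X;F)=(\bigoplus_{\dim\sigma=*}F(\sigma),\partial)$ and $F\mapsto C_\bullet^\BM(\Delta(P);\Delta(F))$ are exact functors $[P^{\mathrm{op}},\mathbf{Ab}]\to\mathrm{Ch}_{\ge 0}(\mathbf{Ab})$ that commute with arbitrary direct sums; for the first this is because it is a direct sum of evaluation functors, and for the second one additionally uses that $F\mapsto\Delta(F)$ is exact, since $\Delta(F)(c)=F(\max c)$. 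Hence a short exact sequence of cellular cosheaves, being objectwise short exact, yields in each case a short exact sequence of chain complexes and therefore a long exact homology sequence, so $H_*^\BM(X;-)$ and $H_*^\BM(\Delta(P);\Delta(-))$ are homological $\delta$-functors on $[P^{\mathrm{op}},\mathbf{Ab}]$ commuting with direct sums.

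Recall that the representables $R_q:=\Z\cdot\mathrm{Hom}_{P^{\mathrm{op}}}(q,-)$ for $q\in P$ form a set of projective generators of $[P^{\mathrm{op}},\mathbf{Ab}]$; as a cellular cosheaf, $R_q$ is $\Z$ on the faces $\sigma\le q$ of $q$ with all structure maps the identity and is $0$ elsewhere. I would compute the two $\delta$-functors on these. On one side, since $X$ is \emph{regular}, the faces $\sigma\le q$ are exactly the cells of the subcomplex $\overline q$, and coincidence numbers being local this identifies $C_\bullet^\BM(X;R_q)$ with the cellular chain complex of $\overline q$; moreover $\overline q\cong\Delta(P_{\le q})$ is a cone with apex $q$, hence contractible, so this complex has homology $\Z$ concentrated in degree $0$. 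On the other side, $\Delta(R_q)$ is $\Z$, with identity structure maps, on the simplices of the full subcomplex $\Delta(P_{\le q})\subset\Delta(P)$ (those whose vertices are all $\le q$), so $C_\bullet^\BM(\Delta(P);\Delta(R_q))$ is the simplicial chain complex of the same contractible cone $\Delta(P_{\le q})$, again with homology $\Z$ in degree $0$. Thus both $\delta$-functors vanish in positive degrees on every $R_q$, hence---using commutation with direct sums and passing to direct summands---on every projective; therefore each is a universal $\delta$-functor, that is, the left derived functor of its degree-$0$ part.

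It remains to identify the degree-$0$ parts. Directly from the definition of $\Delta(F)$, the edges of $\Delta(P)$ realise precisely the generating relations of $\colim^{P^{\mathrm{op}}}F$, so $H_0^\BM(\Delta(P);\Delta(F))=\colim^{P^{\mathrm{op}}}F$; and $H_0^\BM(X;F)\cong\colim^{P^{\mathrm{op}}}F$ because the boundary of each cell of the regular complex $X$ is connected, so the relations contributed by the higher cells are consequences of those contributed by the edges. (Equivalently: both degree-$0$ functors are right exact, preserve coproducts, and agree with $\colim^{P^{\mathrm{op}}}$ on the generators $R_q$ and on the canonical maps between them, hence are naturally isomorphic.) Since universal $\delta$-functors with naturally isomorphic degree-$0$ parts are naturally isomorphic, we conclude $H_*^\BM(X;F)\cong\colim_*^{P^{\mathrm{op}}}F\cong H_*^\BM(\Delta(P);\Delta(F))$.

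The crux is the computation $C_\bullet^\BM(X;R_q)\cong C_\bullet^\mathrm{CW}(\overline q)$ together with the contractibility of $\overline q$: this is exactly the point at which regularity of $X$ is indispensable, since for a general CW complex the closure of a cell is neither a subcomplex nor necessarily acyclic, and it rests on the standard structure theory of regular CW complexes (the closure of a cell is a subcomplex homeomorphic to a ball with cell poset the principal down-set of the cell, equivalently $\overline q\cong\Delta(P_{\le q})$). A more computational alternative would be to write down the classical subdivision chain map $C_\bullet^\BM(X;F)\to C_\bullet^\BM(\Delta(P);\Delta(F))$ explicitly and prove it a quasi-isomorphism by induction over the skeleta of $X$; the $\delta$-functor argument above trades the attendant orientation bookkeeping for the formalism of derived functors.
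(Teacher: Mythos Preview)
The paper does not actually prove this lemma: it merely states it as ``the dual version of \cite[Theorem 11.27]{C}'' and moves on. Your argument supplies a genuine, self-contained proof where the paper gives only a citation.

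Your approach---recognising both sides as homological $\delta$-functors on $[P^{\mathrm{op}},\mathbf{Ab}]$, checking coeffaceability by computing on the representable generators $R_q$ (where regularity enters via the contractibility of $\overline{q}\cong\Delta(P_{\le q})$), and identifying the degree-$0$ part with $\colim^{P^{\mathrm{op}}}$---is correct and is in fact the standard way to identify such things with higher colimits in the sense of \cite{BK}. The only passage that deserves a second look is the direct verification that $H_0^\BM(X;F)\cong\colim^{P^{\mathrm{op}}}F$: your ``boundary of each cell is connected'' remark is the right idea but needs a word about why the generators $F(\sigma)$ for $\dim\sigma\ge 1$ are redundant in the colimit (each is identified with its image under $F(\sigma>\tau)$ for any vertex $\tau$ of $\sigma$); your parenthetical alternative---right-exact, coproduct-preserving, and agreeing on the $R_q$ and the canonical maps between them---is cleaner and avoids this bookkeeping entirely.

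Compared with what one finds in Curry's thesis (an explicit subdivision chain map and an acyclic-carrier or skeleton-induction argument), your derived-functor route trades concrete chain-level formulas for a shorter, more conceptual proof; either is adequate here, and yours has the virtue of making the role of regularity completely transparent.
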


Thus by Theorem \ref{main}, we obtain:

\begin{theorem}
  \label{regular}
  Let $F$ be a cellular cosheaf on a regular CW complex $X$. Then there is an isomorphism
  \[
    H_*^\BM(X;F)\cong H_*(\X(P(\Delta(P(X))));\widehat{\Delta(F)}).
  \]
\end{theorem}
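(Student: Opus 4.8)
The statement \textbf{Theorem \ref{regular}} is an immediate corollary of the two results that precede it, so the proof is essentially a chaining of isomorphisms and the plan is short. The goal is to produce the isomorphism
\[
  H_*^\BM(X;F)\cong H_*(\X(P(\Delta(P(X))));\widehat{\Delta(F)}).
\]

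The plan is to start from \textbf{Lemma \ref{subdivision}}, which gives
\[
  H_*^\BM(X;F)\cong H_*^\BM(\Delta(P(X));\Delta(F)).
\]
The right-hand side is the Borel-Moore homology of the cellular cosheaf $\Delta(F)$ on the simplicial complex $\Delta(P(X))$. Since $\Delta(P(X))$ is a simplicial complex, \textbf{Theorem \ref{main}} applies directly with $K=\Delta(P(X))$ and with the cellular cosheaf taken to be $\Delta(F)$ (viewed as a cellular cosheaf on $K$ in the sense of a functor on $P(K)^{\mathrm{op}}$, i.e. on $P(\Delta(P(X)))^{\mathrm{op}}$). This yields
\[
  H_*^\BM(\Delta(P(X));\Delta(F))\cong H_*(\X(P(\Delta(P(X))));\widehat{\Delta(F)}),
\]
and composing the two isomorphisms gives the claim. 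So the two key steps, in order, are: (1) invoke Lemma \ref{subdivision} to replace $X$ by its order complex $\Delta(P(X))$; (2) invoke Theorem \ref{main} on that simplicial complex applied to the cellular cosheaf $\Delta(F)$.

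The one point requiring a word of care is the bookkeeping of which poset and which CW complex $\Delta(F)$ lives over, so that the hypothesis of Theorem \ref{main} (\emph{``a cellular cosheaf on a simplicial complex''}) is met with the correct notation: here $K=\Delta(P(X))$ is a simplicial complex, $P(K)=P(\Delta(P(X)))$ is its face poset, and $\Delta(F)$ is a functor $P(K)^{\mathrm{op}}\to\mathbf{Ab}$, so $\widehat{\Delta(F)}$ is the associated cosheaf on $\X(P(\Delta(P(X))))$ as in Lemma \ref{|F|}. This is purely a matter of tracking definitions; there is no genuine obstacle. Thus the proof reads simply:

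\begin{proof}
  By Lemma \ref{subdivision}, there is an isomorphism
  \[
    H_*^\BM(X;F)\cong H_*^\BM(\Delta(P(X));\Delta(F)).
  \]
  Since $\Delta(P(X))$ is a simplicial complex and $\Delta(F)$ is a cellular cosheaf on it, Theorem \ref{main} applied to $\Delta(F)$ yields
  \[
    H_*^\BM(\Delta(P(X));\Delta(F))\cong H_*(\X(P(\Delta(P(X))));\widehat{\Delta(F)}).
  \]
  Combining the two isomorphisms completes the proof.
\end{proof}
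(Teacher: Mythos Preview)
Your proof is correct and follows exactly the paper's own approach: the paper presents Theorem \ref{regular} as an immediate consequence of Lemma \ref{subdivision} followed by Theorem \ref{main} applied to the simplicial complex $\Delta(P(X))$ with the cellular cosheaf $\Delta(F)$. Your bookkeeping remark about $\Delta(F)$ being a functor on $P(\Delta(P(X)))^{\mathrm{op}}$ is accurate and matches the paper's conventions.
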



\begin{thebibliography}{99}
  \bibitem{Bar} J.A. Barmak, Algebraic Topology of Finite Topological Spaces and Applications, Lecture Notes in Mathematics \textbf{2032}, Springer-Verlag, Berlin, 2011.

  \bibitem{BK} A. Bousfield and D. Kan, Homotopy Limits, Completions and Localizations, Lecture Notes in Mathematics \textbf{304}, Springer-Verlag, Berlin, 1972.

  \bibitem{B} G.E. Bredon, Cosheaves and homology, Pacific J. Math. \textbf{25} (1968), no. 1, 1-32.

  \bibitem{B1} G.E. Bredon, Sheaf Theory, Graduate Texts in Mathematics \textbf{170} (2nd ed.), Springer-Verlag, Berlin, New York, 1997.

  \bibitem{C} J.M. Curry, Sheaves, cosheaves and applications, \url{https://arxiv.org/abs/1303.3255}.

  \bibitem{C1} J.M. Curry, Topological data analysis and cosheaves, Jpn. J. Ind. Appl. Math. \textbf{32} (2015), no. 2, 333-371.

  \bibitem{C2} J.M. Curry, Dualities between cellular sheaves and cosheaves, J. Pure Appl. Algebra \textbf{222} (2018), no. 4, 966-993.

  \bibitem{C3} J. Curry and A. Patel, Classification of constructible cosheaves, Theory Appl. Categ. \textbf{35} (2020), paper no. 27, 1012-1047.

  \bibitem{D} R. Deheuvels, Homologie des ensembles ordonn\'{e}s et des espaces topologiques, Bull. Soc. Math. France \textbf{90} (1962), 261-321.

  \bibitem{GH} J. Hansen and R. Ghrist, Toward a spectral theory of cellular sheaves, J. Appl. Comput. Topol. \textbf{3} (2019), no. 4, 315-358.

  \bibitem{G} R. Ghrist, Elementary Applied Topology, ed. 1.0, Createspace, 2014.

  \bibitem{M} S. Mansourbeigi, Sheaf theory approach to distributed applications: analysing heterogeneous data in air traffic monitoring, Int. J. Data Sci. Anal. \textbf{3} (2017), no. 5, 34-39.


  \bibitem{N} V. Nanda, Local cohomology and stratification, Found. Comput. Math. \textbf{20} (2020), no. 2, 195-222.

  \bibitem{P} A.V. Prasolov, Precosheaves of pro-sets and abelian pro-groups are smooth, Topology Appl. \textbf{159} (2012), no. 5, 1339-1356.

  \bibitem{R} M. Robinson, Cosheaf representations of relations and Dowker complexes, J. Appl. Comput. Topol. (2021), open access.

  \bibitem{SMP}  V. de Silva, E. Munch, and A. Patel. Categorified Reeb graphs, Discrete  Comput. Geom. \textbf{55} (2016), 854-906.

  \bibitem{S} R.E. Stong, Finite topological spaces, Trans. Amer. Math. Soc. \textbf{123} (1966), no. 2, 325-340.

  \bibitem{Y} H.R. Yoon, Cellular sheaves and cosheaves for distributed topological data analysis, PhD thesis, University of Pennsylvania, 2018.

  \bibitem{YG} H.R. Yoon and R. Ghrist, Persistence by parts: multiscale feature detection via distributed persistent homology, \url{https://arxiv.org/abs/2001.01623}.
\end{thebibliography}
\end{document}